\newenvironment{proof}{\par\noindent{\bf Proof \,}}{$\hfill \Box$\par\bigskip}
\date{\empty}
\newtheorem{thm}{Theorem}[section]
\newtheorem{lem}[thm]{Lemma}
\newtheorem{prop}[thm]{Proposition}
\newtheorem{cor}[thm]{Corollary}
\newtheorem{ex}[thm]{Example}
\begin{document}

\title{Strongly J-clean matrices over 2-projective-free rings}

\author{Marjan Sheibani, Huanyin Chen and Rahman
 Bahmani}

\maketitle

\begin{abstract}
An element $a$ of a ring is strongly J-clean if it is the sum of
an idempotent and an element in the Jacobson radical that
commutes. We characterize the strongly J-clean $2\times 2$
matrices over noncommutative 2-projective-free rings. For a
2-projective-free ring $R$, $A\in M_2(R)$ is strongly J-clean if
and only if $A\in J\big(M_2(R)\big)$, or $I_2-A\in
J\big(M_2(R)\big)$, or $A$ is similar to $\left(
\begin{array}{cc}
0&\lambda\\
1&\mu\end{array} \right)$ where $\lambda\in J(R),\mu\in 1+J(R)$,
and the equation $x^2-x\mu -\lambda=0$ has a root in $J(R)$ and a
root in $1+J(R)$. Strongly J-clean $2\times 2$ matrices over power
series are therefrom investigated. We prove that if $R$ is a
2-projective-free wb-ring then $A(x)\in M_2(R[[x]])$ is strongly
J-clean if and only if $A(0)\in M_2(R)$ is strongly J-clean.

{\bf Keywords:} strongly J-clean matrix; 2-projective-free ring;
quadratic equation, power series.

{\bf 2010 Mathematics Subject Classification:} 15E50, 16U60.
\end{abstract}

\section{Introduction}
\vskip4mm An element $a\in R$ is called {\it strongly J-clean}
({\it clean}) if there exists an idempotent $e\in R$ such that
$a-e\in J(R)$ $(U(R))$ and $ae=ea$. A ring $R$ is {\it strongly
J-clean} ({\it clean}) provided that every element in $R$ is
strongly J-clean (clean). A ring $R$ in uniquely clean if for any
$a\in R$ there exists a unique idempotent $e\in R$ such that
$a-e\in U(R)$, which was introduced by Anderson and
Camillo~\cite{AC}. Nicholson and Zhou proved that a ring $R$ is
uniquely clean if and only if for any $a\in R$ there exists a
unique idempotent $e\in R$ such that $a-e\in J(R)$~\cite[Theorem
20]{NZ}. Evidently, $\{ $ uniquely clean rings $\}\subsetneq\{ $
strongly J-clean rings $\}\subsetneq\{$ strongly clean rings $\}$.
The inclusions are both proper. For instances, The ring $T_2({\Bbb
Z}_2)$ of all $2\times 2$ up triangular matrices over ${\Bbb Z}_2$
is strongly J-clean, while it is not uniquely
clean~\cite[Corollary 16.4.24]{CH}; and that ${\Bbb Z}_3$ is
strongly clean, while it is not strongly J-clean. Thus, the class
of strongly J-clean rings is a medium between those of uniquely
clean rings and strongly clean rings. On the other hand, for an
arbitrary ring $R$, one easily checks that $\left(
\begin{array}{cc}
1&1\\
1&0
\end{array}
\right)\in M_2(R)$ is not strongly J-clean. Hence, it is
attractive to study when a matrix over a ring can be written in
these special forms. In fact, Strong J-cleanness (cleanness) of
$2\times 2$ matrices over commutative local rings are studied by
many authors.~\cite{L} and ~\cite{Y} investigated strongly clean
decompositions of $2\times 2$ matrices over local rings. Recently,
strong cleanness of matrices over a general ring was discussed in
~\cite{CC} and~\cite{D}. Furthermore, strongly $J$-clean matrices
over noncommutative local rings were studied in ~\cite{C}.

Let $R$ be a ring with an identity. We say that a ring $R$ is a
{\it 2-projective-free ring} if every 2-generated projective
R-module is free of constant rank. A ring $R$ is {\it
projective-free} if every finitely generated projective R-module
is free of constant rank. Thus, projective-free rings form a
subclass of 2-projective-free rings. Then we see that division
rings (including fields), local rings, B\'{e}zout domain (i.e.,
domain in which every finitely generated right ideal is principal,
including principal ideal domains, valuation rings, the ring of
entire functions and the ring $\overline{{\Bbb Z}}$ of all
algebraic integers), polynomial rings of a principal ideal domains
are all 2-projective-free. The motivation of this article is to
explore strongly J-clean decompositions of $2\times 2$ matrices
over such new kind of rings.

In Section 2, we shall investigate elementary properties of
2-projective-free rings which will be used in the sequel. In
particular, we prove that every PSF ring is 2-projective-free.
This provides a large class of such rings.

We extend, in Section 3, the main results of strongly J-clean
matrices over local rings in ~\cite{C} to noncommutative
2-projective-free rings. For a 2-projective-free ring $R$, we show
that $A\in M_2(R)$ is strongly J-clean if and only if $A\in
J\big(M_2(R)\big)$, or $I_2-A\in J\big(M_2(R)\big)$, or $A$ is
similar to $\left(
\begin{array}{cc}
0&\lambda\\
1&\mu\end{array} \right)$ where $\lambda\in J(R),\mu\in 1+J(R)$,
and the equation $x^2-x\mu -\lambda=0$ has a root in $J(R)$ and a
root in $1+J(R)$. Strongly J-clean matrices over a commutative
2-projective-free ring are thereby characterized.

Section 4 is concern on strongly J-clean matrices over power
series of 2-projective-free rings. Let $R$ be a ring, and let
$f(x)\in R[[x]]$. If $f(0)\in R$ is optimally J-clean, we prove
that $f(x)\in R[[x]]$ is strongly J-clean. Let $R$ be a ring, and
let $\alpha,\beta\in R$. ${\ell}_{\alpha}-r_{\beta}: R\to R$ is
the group homomorphism given by $x\mapsto \alpha x-x\beta$ for any
$x\in R$. We say that $R$ is a {\it weakly bleached ring} ({\it
wb-ring}, for short) provided that for any $\alpha\in
J(R),\beta\in 1+J(R)$, ${\ell}_{\alpha}-r_{\beta}$ and
${\ell}_{\beta}-r_{\alpha}$ are surjective. For instance, every
commutative ring is a wb-ring. This concept was firstly introduced
only for general local rings (not necessary be commutative)
~\cite{Y}. Let $R$ be a 2-projective-free wb-ring. We shall prove
that $A(x)\in M_2(R[[x]])$ is strongly J-clean if and only if so
is $A(0)\in M_2(R)$ is strongly J-clean.

Throughout, all rings are associative with an identity. $M_n(R)$
will denote the ring of all $n\times n$ full matrices over $R$
with an identity $I_n$. $GL_n(R)$ stands for the $n$-dimensional
general linear group of $R$. Let $M$ be a right module. $end(M)$
and $aut(M)$ stand for the ring of endomorphism and automorphism
of $M$, respectively. We always use $[a,b]$ to stand for the
commutator $ab-ba$ for any $a,b\in R$.

\section{2-Projective-free rings}

\vskip4mm The purpose of this section is to investigate elementary
properties of 2-projective-free rings which will be used in the
sequel. A ring $R$ is called {\it 2-IBN} if $R^m\cong R^n
(m,n=1,2)$ implies that $m=n$. We begin with

\begin{prop} \label{21} A ring is 2-projective-free if and only if $R$ is 2-IBN and every idempotent $2\times 2$ matrix over $R$ admits
a diagonal reduction to $I_2$ or $
\left( \begin{array}{cc} 1&0\\
0&0 \end{array} \right)$.
\end{prop}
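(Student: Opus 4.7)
The plan is to use the standard dictionary between idempotents $E\in M_2(R)$ and direct summands $ER^2$ of $R^2$: every $2$-generated projective $R$-module $P$ arises as $ER^2$ via a split surjection $R^2\twoheadrightarrow P$, and conjugating $E$ by $U\in GL_2(R)$ amounts to applying the automorphism $U$ to the decomposition $R^2=ER^2\oplus(I_2-E)R^2$. Under this dictionary, freeness of $ER^2$ of rank $k\in\{0,1,2\}$ corresponds exactly to $E$ being conjugate to $0$, $\mathrm{diag}(1,0)$, or $I_2$, and constancy of rank corresponds to 2-IBN.

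For the forward direction, assume $R$ is 2-projective-free. Then $R$ is 2-IBN, because an isomorphism $R^m\cong R^n$ with $m\ne n$ in $\{1,2\}$ would leave the rank of the $2$-generated projective module $R^m$ ambiguous, violating constancy. Now fix an idempotent $E\in M_2(R)$. Both $P=ER^2$ and $Q=(I_2-E)R^2$ are $2$-generated projective, hence free by hypothesis, say $P\cong R^k$ and $Q\cong R^\ell$. Since $R^k\oplus R^\ell\cong R^2$ and $R^2$ has well-defined constant rank $2$, we must have $k+\ell=2$. If $k\in\{0,2\}$ then $E$ equals $0$ or $I_2$ outright; in the remaining case $k=\ell=1$, choose isomorphisms $\varphi\colon P\xrightarrow{\cong}R$ and $\psi\colon Q\xrightarrow{\cong}R$ and let $U\in GL_2(R)$ be the combined automorphism $R^2=P\oplus Q\to R\oplus R=R^2$. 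Since $U$ carries $(P,Q)$ to the coordinate decomposition $(Re_1,Re_2)$, conjugation by $U$ carries $E$ to the coordinate projection $\mathrm{diag}(1,0)$.

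For the converse, let $P$ be a $2$-generated projective $R$-module. A split surjection $R^2\twoheadrightarrow P$ realises $P$ as $ER^2$ for some idempotent $E\in M_2(R)$, and the hypothesis produces $U\in GL_2(R)$ with $UEU^{-1}\in\{0,\mathrm{diag}(1,0),I_2\}$; correspondingly $P$ is isomorphic to $0$, $R$, or $R^2$. Hence $P$ is free, and 2-IBN ensures that the rank is well-defined among $\{0,1,2\}$, completing the verification of 2-projective-freeness.

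The only delicate point is the $k=\ell=1$ case of the forward direction, where one must check that the $U$ assembled from the two summand-wise isomorphisms actually intertwines $E$ with $\mathrm{diag}(1,0)$; this is a short diagram-chase rather than a conceptual obstacle, since the proposition is at heart a translation of the definition of 2-projective-freeness into matricial language via the idempotent-summand dictionary.
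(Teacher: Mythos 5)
Your argument is correct and is essentially the standard idempotent--direct-summand dictionary that the paper itself invokes by simply citing the analogous result for projective-free rings in Cohn's book; you have just written out in detail the proof the paper leaves to that reference, including the 2-IBN bookkeeping that forces $k+\ell=2$ and the conjugation of $E$ to $\mathrm{diag}(1,0)$ when both summands are free of rank one. The only cosmetic deviation is that you sensibly allow the trivial idempotents $0$ and $I_2$ as separate outcomes, which the paper's statement glosses over.
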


\begin{proof} This is similar to that of projective-free rings~\cite[Proposition 2.6]{Co}.
\end{proof}

%\begin{cor} \label{22} Every division ring is 2-projective-free.\end{cor}
%\begin{proof}As every every projective module is free with constant rank over every division ring, so we get the result from the above theorem
%.\end{proof}

\begin{cor}\label{22} Every 2-projective-free ring $R$ has only trivial idempotents.
\end{cor}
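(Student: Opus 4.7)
The plan is to argue directly from the module-theoretic definition of 2-projective-freeness, using the Peirce decomposition $R = eR \oplus (1-e)R$ associated to the given idempotent $e \in R$. Both summands are cyclic and projective (as direct summands of the free module $R$), and hence are 2-generated projective right $R$-modules to which the hypothesis applies.

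The key steps, in order, are: (i) apply 2-projective-freeness to write $eR \cong R^n$ and $(1-e)R \cong R^m$ for uniquely determined non-negative integers $n$ and $m$; (ii) sum the two isomorphisms to obtain $R \cong R^{n+m}$; (iii) invoke the constant rank clause once more, this time for $R$ itself, to conclude $n+m = 1$; (iv) dispose of the two surviving cases $(n,m) = (0,1)$ and $(n,m) = (1,0)$, giving $eR = 0$ (so $e = 0$) and $(1-e)R = 0$ (so $e = 1$) respectively.

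The step I expect to be the main obstacle is step (iii): extracting $n+m=1$ depends on reading ``free of constant rank'' as implying uniqueness of rank for free modules, i.e.\ $R^a \cong R^b \Rightarrow a=b$. Proposition~\ref{21} already supplies 2-IBN, which disposes of the borderline case $n+m=2$, while the remaining uniqueness (ruling out $n+m \ge 3$) is precisely what the ``constant rank'' phrase in the definition is there to guarantee; cyclicity of $eR$ and $(1-e)R$ can be used as a back-up, since a surjection $R \twoheadrightarrow R^n$ splits against the projectivity of $R^n$ and comparison of ranks then forces $n \le 1$. Once step (iii) is established, the case analysis in step (iv) is immediate and completes the proof.
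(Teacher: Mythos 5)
Your proof is correct and takes essentially the same route as the paper's: the Peirce decomposition $R = eR \oplus (1-e)R$, freeness of the two cyclic 2-generated projective summands, and a rank-uniqueness/2-IBN comparison (the paper phrases it as a contradiction, deducing $R \cong R^2$ from a nontrivial $e$ and contradicting the 2-IBN property supplied by Proposition~\ref{21}). If anything, your explicit treatment of the case $n+m \geq 3$ via the constant-rank clause and cyclicity is more careful than the paper's unexplained assertion that $eR = 0$ or $eR \cong R$.
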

\begin{proof} Let $0,1\neq e\in R$ be an idempotent. Then $eR$ is a trivially 2-generated projective module. Hence, $eR$ is free.
Thus, $eR=0$ or $eR\cong R$. Likewise, $(1-e)R=0$ or $(1-e)R\cong
R$. This implies that $eR\cong (1-e)R\cong R$, and so $R\cong
eR\oplus (1-e)R\cong R^2$. By Proposition~\ref{21}, $R$ is 2-IBN.
This gives a contradiction, hence the result.\end{proof}

\vskip4mm As an immediate consequence, we deduce that every
2-projective-free ring is directly finite, i.e., every right or
left invertible element is invertible.

%\begin{cor} Let $R$ be a ring . Then $R$ is 2-projective-free if
%and only if \end{cor}
%\begin{enumerate}
%\item [(1)]{\it $R$ is connected;} \vspace{-.5mm}
%\item [(2)]{\it Every $2\times 2$ idempotent matrix over $R$ admits a diagonal reduction.}
%\end{enumerate}\vspace{-.5mm}  \begin{proof}
%Let $R$ be a 2-projective-free ring then $R$ is connected by
%Corollary~\ref{22} and every $2\times 2$ idempotent matrix over
%$R$ admits a diagonal reduction by definition.

%Conversely assume that $(1)$ and $(2)$ hold, let $A$ be a $2\times
%2$ idempotent matrix over $R$. As every idempotent matrix admits
%diagonal reduction, there exist matrices $U, V \in GL_{2} (R)$
%such that $UAV =B$ is a diagonal matrix. Let $C= V^{-1}U^{-1}$.
%Then $A$ is similar to $BC$. Set $D=BC$. As $B$ is a diagonal
%matrix, so is $BC$, $BC=UAVV^{-1}U^{-1}=UAU^{-1}$ then $A$ is
%similar to diagonal matrix.\end{proof}

\begin{lem} \label{23} Let $I\subseteq J(R)$. If $R/I$ is 2-projective-free, then so is $R$.\end{lem}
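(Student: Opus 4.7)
The plan is to verify the two criteria of Proposition~\ref{21} for $R$: that $R$ is 2-IBN, and that every idempotent matrix $E\in M_2(R)$ admits a diagonal reduction to $I_2$ or $\left(\begin{array}{cc}1&0\\0&0\end{array}\right)$. The 2-IBN part is formal: an isomorphism $R^m\cong R^n$ with $m,n\in\{1,2\}$ descends to $(R/I)^m\cong(R/I)^n$, and since $R/I$ is 2-projective-free and hence 2-IBN by Proposition~\ref{21}, one concludes $m=n$.

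The essential content is the idempotent reduction. Given an idempotent $E\in M_2(R)$, its image $\bar E\in M_2(R/I)\cong M_2(R)/M_2(I)$ is again idempotent. If $\bar E=0$, then $E\in M_2(I)\subseteq J(M_2(R))$ is an idempotent lying in the Jacobson radical, whence $E=0$, already in diagonal form. Otherwise, by Proposition~\ref{21} applied to $R/I$, there exist $\bar P,\bar Q\in GL_2(R/I)$ with $\bar P\bar E\bar Q=\bar D$ for some $\bar D$ equal to $I_2$ or $\left(\begin{array}{cc}1&0\\0&0\end{array}\right)$. I would then observe that for idempotent matrices the relations ``equivalence via invertibles'' and ``similarity'' coincide, because both are controlled by the isomorphism classes of the image and kernel of the idempotent; hence $(\bar P,\bar Q)$ may be replaced by a single conjugator $\bar U\in GL_2(R/I)$ with $\bar U\bar E\bar U^{-1}=\bar D$.

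Next, because $M_2(I)\subseteq M_2(J(R))=J(M_2(R))$, units of $M_2(R/I)$ lift to units of $M_2(R)$, so $\bar U$ lifts to some $U\in GL_2(R)$. Then $UEU^{-1}$ and $D$ are idempotents of $M_2(R)$ whose difference lies in $J(M_2(R))$. I would invoke the standard fact that two idempotents of a ring $S$ differing by an element of $J(S)$ are conjugate in $S$, applied to $S=M_2(R)$: this yields $V\in GL_2(R)$ with $V(UEU^{-1})V^{-1}=D$, so $(VU)E(VU)^{-1}=D$, which in particular is a diagonal reduction of $E$. Proposition~\ref{21} then gives that $R$ is 2-projective-free.

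The delicate point will be the identification of equivalence with similarity for idempotents in the noncommutative setting, for it is precisely this coincidence that allows the two-sided reduction over $R/I$ to be promoted to a conjugation that can be lifted modulo $J(M_2(R))$; without it one would only obtain $PEQ\equiv D\pmod{M_2(I)}$ with no evident way to correct the error term so as to achieve equality. A short standalone verification of this coincidence (via the classification of idempotents up to similarity by the pair of iso-classes of image and kernel), together with citation of the ``idempotents congruent mod $J$ are conjugate'' lemma, should render the proof fully rigorous.
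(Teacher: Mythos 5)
Your proposal is correct, but it follows a genuinely different route from the paper. The paper works directly with modules: given a $2$-generated projective $R$-module $P$, the reduction $P/IP$ is a $2$-generated projective $R/I$-module, hence free, say $P/IP\cong (R/I)^m\cong R^m/IR^m$, and since $I\subseteq J(R)$ this isomorphism lifts (the standard Nakayama-type lifting of isomorphisms of finitely generated projectives modulo an ideal inside the radical) to give $P\cong R^m$. You instead verify the two conditions of Proposition~\ref{21} for $R$ at the matrix level: 2-IBN descends formally, and the idempotent reduction is obtained by lifting units through $M_2(I)\subseteq J\big(M_2(R)\big)$ and then using that idempotents of a ring congruent modulo the Jacobson radical are conjugate (e.g.\ via the unit $u=ef+(1-e)(1-f)$, which is $\equiv 1$ modulo the radical and satisfies $eu=uf$). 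The point you flag as delicate is indeed handled correctly: for idempotent square matrices, equivalence and similarity coincide, since an equivalence $\psi E=F\varphi$ yields $\mathrm{im}\,E\cong\mathrm{im}\,F$ and $\ker E\cong\ker F$, and these two isomorphism classes classify idempotents up to similarity; one may also simply read the ``diagonal reduction'' of an idempotent in Proposition~\ref{21} as similarity, since it encodes the splitting of $R^2$ into image and kernel. Comparing the two: the paper's module argument is shorter and more conceptual, concentrating all the work in one appeal to $I\subseteq J(R)$; your matrix argument is longer but more explicit and constructive, producing the conjugating unit and relying only on elementary radical-lifting facts, at the cost of the equivalence-versus-similarity digression and the separate treatment of the trivial idempotents $0$ and $I_2$.
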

\begin{proof} Let $P$ be a 2-generated projective $R$- module then $ P/IP $ is a 2-generated projective $R/I$- module.
As $R/I$ is 2-projective-free, we get $ P/IP \cong (R/I)^{m}\cong
R^m/IR^m$. Since $ I\subseteq J(R)$, we deduce that $ P\cong R^{m}
$, as desired.\end{proof}

\begin{thm} \label{24} Let $R$ be 2-projective-free. Then $R[[x]]$ is 2-projective-free.\end{thm}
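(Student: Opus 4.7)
The plan is to reduce the claim to Lemma~\ref{23} by identifying a suitable ideal of $R[[x]]$ contained in the Jacobson radical whose quotient is $R$. The natural candidate is the augmentation ideal $xR[[x]]$, since evaluation at $x=0$ gives a surjective ring homomorphism $R[[x]] \to R$ with kernel exactly $xR[[x]]$, hence $R[[x]]/xR[[x]] \cong R$. By hypothesis, the quotient $R$ is 2-projective-free, so once we verify $xR[[x]] \subseteq J(R[[x]])$, Lemma~\ref{23} closes the argument immediately.

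First I would verify the containment $xR[[x]] \subseteq J(R[[x]])$. It suffices to show that for every $f(x) \in R[[x]]$ the element $1 - xf(x)$ is a unit (both sides) in $R[[x]]$; the explicit inverse is the formal sum $\sum_{n \geq 0} (xf(x))^n$, which is a well-defined element of $R[[x]]$ because the coefficient of any fixed power $x^k$ involves only the finitely many terms with $n \leq k$. A symmetric argument on the other side shows $1 - xf(x)$ has a two-sided inverse. Hence $xR[[x]]$ lies in $J(R[[x]])$.

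Next I would observe that the evaluation homomorphism $\varepsilon : R[[x]] \to R$ sending $f(x)$ to $f(0)$ is a surjective ring homomorphism with kernel $xR[[x]]$, giving the isomorphism $R[[x]]/xR[[x]] \cong R$. Since $R$ is 2-projective-free and $xR[[x]] \subseteq J(R[[x]])$, Lemma~\ref{23} applied to $I = xR[[x]]$ yields that $R[[x]]$ is 2-projective-free.

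I do not anticipate a serious obstacle: the content of the theorem is already packaged in Lemma~\ref{23}, and the only nontrivial verification is the well-known fact that the augmentation ideal of a formal power series ring sits inside the Jacobson radical. The bookkeeping with the geometric-series inverse is routine, and no properties of idempotent reduction or 2-IBN need to be invoked directly, as these are absorbed into Lemma~\ref{23}.
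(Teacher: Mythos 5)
Your proposal is correct and follows essentially the same route as the paper: evaluate at $x=0$, note that the kernel $xR[[x]]$ lies in $J(R[[x]])$ (the paper cites the standard description of units of $R[[x]]$, you verify it via the geometric series), and apply Lemma~\ref{23} to conclude.
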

\begin{proof}
Let $\varphi :R[[X]]\longrightarrow R$ be a ring homomorphism that
is defined by $\varphi(f) = f(0) $, then $\varphi$ is surjective.
Since invertible elements of $R[[X]]$ are those whose constant
terms are invertible in $R$, we have $ker(\varphi)\subseteq
J(R[[x]]))$. Clearly, $R[[X]]/ker(\varphi)\cong R$. It follows by
Lemma~\ref{23} that $R[[X]]$ is 2-projective-free, as asserted.
\end{proof}

%\begin{cor} \label{24} Every local ring is 2-projective-free.\end{cor}
%\begin{proof}
%Let $R$ be a local ring then $R/J(R)$ is a division ring and we obtain the result by the two last corollaries.
%\end{proof}

An R-module $P$ is {\it stably free} if there exists natural
numbers $m,n$ such that $P\oplus R^m\cong R^n$. It is well known
that the ring ${\Bbb Z}[x,y,x]/(x^2+y^2+z^2-1)$ has stable free
modules which are not free~\cite[Exercise 1.4.23]{R}. A
commutative ring $R$ is called a {\it PSF ring} if each finitely
generated projective module is stably free, i.e., the {\it
Grothendieck group} $K_0(R)={\Bbb Z}[R]$. Every projective-free
rings is PSF. In fact, $\{$ PSF rings $\}\bigcap \{$ Hermite
rings~$\}$ $=\{$~projective-free rings $\}$. The following result
provides a large class of 2-projective-free rings.

\begin{thm} \label{25} Every PSF rings is 2-projective-free.\end{thm}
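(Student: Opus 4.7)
My plan is to check directly that every 2-generated projective module $P$ over a PSF ring $R$ is free of constant rank; recall that a PSF ring is commutative by definition, so standard localization techniques are available. The first step is to observe that a 2-generated projective $P$ is the image of a split surjection $R^2 \twoheadrightarrow P$, and hence $R^2 \cong P \oplus Q$ for some projective complement $Q$ (itself a summand of $R^2$, so also 2-generated).

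Next I would exploit the hypothesis $K_0(R) = \mathbb{Z}[R]$ to write $[P] = r[R]$ for some integer $r \ge 0$, equivalently $P \oplus R^m \cong R^{r+m}$ for some $m \ge 0$. Localizing at an arbitrary prime $\mathfrak{p}$ gives $P_\mathfrak{p} \cong R_\mathfrak{p}^r$, so $P$ has constant rank $r$ across $\mathrm{Spec}(R)$. Localizing the decomposition $P \oplus Q \cong R^2$ and using additivity of rank on direct sums over the local rings $R_\mathfrak{p}$ then forces $r \in \{0, 1, 2\}$.

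The proof concludes with a case analysis on $r$. If $r = 0$, then $P$ is finitely generated and $P_\mathfrak{p} = 0$ for every prime $\mathfrak{p}$, so $P = 0$. If $r = 2$, then $Q$ has rank $0$ everywhere, and the previous case applied to $Q$ yields $Q = 0$, whence $P \cong R^2$. The delicate case is $r = 1$: from $P \oplus R^m \cong R^{m+1}$ I would take top exterior powers to obtain $\det(P) \otimes R \cong R$, and since a rank-one projective coincides with its own determinant, this yields $P \cong R$.

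The main obstacle is the rank-one case, since stably free modules are not in general free --- the paper itself recalls the classical example over $\mathbb{Z}[x, y, z]/(x^2 + y^2 + z^2 - 1)$ of a non-free stably free module. The point is that this obstruction only appears at rank $\ge 2$: for rank one, the canonical identification $L = \Lambda^1(L) = \det(L)$ together with the multiplicativity of $\det$ on direct sums reduces stable triviality to actual triviality, which is what makes the PSF hypothesis sufficient at the 2-generated level even though in general (without a Hermite-type hypothesis) it would not suffice for all finitely generated projectives.
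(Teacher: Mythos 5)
Your proposal is correct and follows essentially the same route as the paper's proof: split $R^2\cong P\oplus Q$, use the PSF hypothesis to make $P$ (and $Q$) stably free of constant rank, and run the case analysis $\mathrm{rank}(P)=0,1,2$. The only difference is that in the rank-one case the paper simply cites the fact that a stably free module of rank one over a commutative ring is free, whereas you supply its standard proof via top exterior powers and the identification of a rank-one projective with its determinant.
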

\begin{proof}
Let $P$ be a $2$-generated projective module over a PSF ring $R$.
Then there exists an epimorphism $\varphi: R^2\to P$, and so
$P\oplus Q\cong R^2$, where $Q=ker(\varphi)$. By hypothesis, $P$
is stable free. Write $P\oplus R^m\cong R^n$. Then $P$ is of
constant rank, i.e., $rank(P)=n-m$. Likewise, $Q$ is of constant
rank. Clearly, $rank(P)+rank(Q)=2$. If $rank(P)=0$, then $P=0$. If
$rank(P)=2$, then $rank(Q)=0$, and so $Q=0$. It follows that
$P\cong R^2$. If $rank(P)=1$, then $P\oplus R^m\cong R^{m+1}$, and
so $P$ is free, as every stably free module of rank 1 over a
commutative ring is free, and we are through.\end{proof}

\begin{ex} Let $R={\Bbb Z}\big[\sqrt{-3}\big]$. It is well known that $K_0(R)\cong {\Bbb Z}, [R]\mapsto 1$, and so $K_0(R)={\Bbb Z}[R]$. Thus, $R$ is PSF.
In terms of Theorem~\ref{25}, $R$ is 2-projective-free. In this
case, $R$ is not principal domain, even it is not a Dedekind
domain ~\cite[Exercise 1.4.21]{R}.
\end{ex}

\section{Strongly J-clean matrices}

The aim of this section is to characterize a single strongly
J-clean matrix over 2-projective-free rings in terms of the
solvability of quadratic equation.

\begin{thm} \label{31} Let $R$
be 2-projective-free. Then $A\in M_2(R)$ is strongly J-clean if
and only if $A\in J\big(M_2(R)\big)$ or $I_2-A\in
J\big(M_2(R)\big)$ or $A$ is similar to a matrix $\left(
\begin{array}{cc}
\alpha&0\\
0&\beta\end{array} \right)$, where $\alpha\in 1+J(R), \beta\in
J(R)$.\end{thm}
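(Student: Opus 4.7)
The plan is to prove the equivalence by treating sufficiency and necessity separately; the sufficiency is essentially bookkeeping, while the necessity reduces to a single structural claim about non-trivial idempotents in $M_2(R)$ which is where the 2-projective-free hypothesis does all the work.

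For sufficiency, if $A\in J(M_2(R))$ I take the idempotent $0$, and if $I_2-A\in J(M_2(R))$ I take the idempotent $I_2$. In the diagonal case, write $A = P\,\mathrm{diag}(\alpha,\beta)\,P^{-1}$ and set $E := P\,\mathrm{diag}(1,0)\,P^{-1}$. Then $E^2=E$, $EA=AE$, and $A-E = P\,\mathrm{diag}(\alpha-1,\beta)\,P^{-1}$ lies in $J(M_2(R)) = M_2(J(R))$ because $\alpha-1,\beta\in J(R)$ and conjugation preserves the Jacobson radical.

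For necessity, suppose $A = E+W$ with $E^2=E$, $W\in J(M_2(R))$, and $EW=WE$. The cases $E=0$ and $E=I_2$ give the first two alternatives immediately, so I may assume $E$ is a non-trivial idempotent in $M_2(R)$. The key reduction is to produce $P\in GL_2(R)$ with $P^{-1}EP = \mathrm{diag}(1,0)$. Granted this, replacing $A$ by $P^{-1}AP$ (which remains strongly J-clean with idempotent $\mathrm{diag}(1,0)$), the commutation $EA=AE$ forces $A$ to be block-diagonal $\mathrm{diag}(\alpha,\beta)$, and then $A-E\in M_2(J(R))$ yields $\alpha\in 1+J(R)$ and $\beta\in J(R)$, matching the stated form.

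The main obstacle is the diagonalization of a non-trivial idempotent $E\in M_2(R)$, and this is exactly what 2-projective-freeness is tailored for. I would derive it from Proposition \ref{21}, but intrinsically the idea is to consider the decomposition of right $R$-modules
\[
R^2 \;=\; ER^2 \oplus (I_2-E)R^2,
\]
where each summand is $2$-generated projective, hence free by hypothesis. Writing $ER^2\cong R^r$ and $(I_2-E)R^2\cong R^s$, the 2-IBN property (built into Proposition \ref{21}) forces $r+s=2$; since $r=0$ or $s=0$ would return $E=0$ or $E=I_2$, non-triviality leaves only $r=s=1$. Concatenating bases of the two rank-one summands produces an isomorphism $R^2\to R^2$, i.e.\ an invertible $P\in M_2(R)$, which carries $\mathrm{diag}(1,0)$ to $E$ and completes the proof.
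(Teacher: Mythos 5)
Your proposal is correct and follows essentially the same route as the paper: sufficiency by exhibiting the idempotent $P\,\mathrm{diag}(1,0)\,P^{-1}$ directly, and necessity by conjugating the (necessarily nontrivial) idempotent $E$ to $\mathrm{diag}(1,0)$ via 2-projective-freeness (Proposition \ref{21}) and then letting the commutation relation force the conjugated $A$ into the form $\mathrm{diag}(\alpha,\beta)$ with $\alpha\in 1+J(R)$, $\beta\in J(R)$. The only difference is cosmetic: you unfold the diagonalization of $E$ from the decomposition $R^2=ER^2\oplus(I_2-E)R^2$, whereas the paper simply cites Proposition \ref{21}.
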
 \begin{proof} $\Longleftarrow$ If $A\in
J\big(M_2(R)\big)$, then $A=0+A$ is strongly J-clean. If $I_2-A\in
J\big(M_2(R)\big)$, then $A=I_2+(A-I_2)$ is strongly J-clean. If
$A$ is similar to a matrix $\left(
\begin{array}{cc}
\alpha&0\\
0&\beta\end{array} \right)$ where $\alpha\in 1+J(R), \beta\in
J(R)$, then there exists some $U\in GL_2(R)$ such that
$$A=U^{-1}\left(
\begin{array}{cc}
1&0\\
0&0\end{array} \right)U+U^{-1}\left(
\begin{array}{cc}
\alpha-1&0\\
0&\beta\end{array} \right)U~\mbox{is stroongly J-clean.}$$

$\Longrightarrow$ By hypothesis, there exists an idempotent $E\in
M_2(R)$ and a $W\in J\big(M_2(R)\big)$ such that $A=E+W$ with
$EW=WE$. Suppose that $A$ and $I_2-A$ are not in
$J\big(M_2(R)\big)$. Since $R$ is 2-projective-free, by virtue of
Proposition~\ref{21}, there exists $U\in GL_2(R)$ such that
$UEU^{-1}=diag(1,0)$. Hence, $UAU^{-1}=\left(
\begin{array}{cc}
1&0\\
0&0\end{array} \right)+UWU^{-1}$. Set $V=(v_{ij}):=UWU^{-1}$. Then
$\left(
\begin{array}{cc}
1&0\\
0&0\end{array} \right)V=V\left(
\begin{array}{cc}
1&0\\
0&0\end{array} \right)$; whence, $v_{12}=v_{21}=0$ and
$v_{11},v_{22}\in J(R)$. Therefore $A$ is similar to $\left(
\begin{array}{cc}
1+v_{11}&0\\
0&v_{22}\end{array} \right)$, which completes the
proof.\end{proof}

\vskip4mm Let $R$ be any ring with $J(R)=0$ (i.e., division ring).
Then $A\in M_2(R)$ is strongly J-clean if and only if $A$ is an
idempotent matrix. Further, we derive

\begin{cor} \label{32} Let $R$ be any 2-projective-free commutative ring with $J(R)=0$ (e.g., ${\Bbb Z}$), and let $A\in M_2(R)$. Then
$A$ is strongly J-clean if and only if $A=0$ or $I_2$, or
$\left( \begin{array}{cc} a&b\\
c&1-a \end{array}\right)$ with $bc=a-a^2$.\end{cor}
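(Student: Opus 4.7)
The plan is to specialize Theorem~\ref{31} to the case $J(R)=0$. Since then $J\bigl(M_2(R)\bigr)=M_2\bigl(J(R)\bigr)=0$, the first two alternatives of Theorem~\ref{31} collapse to $A=0$ and $A=I_2$ respectively, while in the third alternative we are forced to take $\alpha\in 1+J(R)=\{1\}$ and $\beta\in J(R)=\{0\}$. Thus, as a first reformulation, $A$ is strongly J-clean if and only if $A=0$, $A=I_2$, or $A$ is similar to $\left(\begin{array}{cc}1&0\\0&0\end{array}\right)$.

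The next step is to translate the similarity condition into the stated entry-wise relations. Writing $A=\left(\begin{array}{cc}a&b\\c&d\end{array}\right)$, similarity to $\left(\begin{array}{cc}1&0\\0&0\end{array}\right)$ forces $A^2=A$ together with $\mathrm{tr}(A)=1$. The trace condition gives $d=1-a$, and the $(1,1)$-entry of $A^2=A$ gives $a^2+bc=a$, i.e.\ $bc=a-a^2$. I would then check that these two relations, combined, automatically make the other three entries of $A^2$ agree with $A$, so they are exactly the conditions describing matrices similar to $\left(\begin{array}{cc}1&0\\0&0\end{array}\right)$.

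For the converse direction I would verify by direct multiplication that any matrix $A=\left(\begin{array}{cc}a&b\\c&1-a\end{array}\right)$ with $bc=a-a^2$ satisfies $A^2=A$, so $A$ is an idempotent. Since $J(R)=0$, the trivial decomposition $A=A+0$ exhibits $A$ as strongly J-clean. Likewise $A=0$ and $A=I_2$ are strongly J-clean via $A=0+0$ and $A=I_2+0$, respectively.

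No real obstacle arises; the corollary is essentially a routine specialization of Theorem~\ref{31} plus elementary matrix arithmetic. The only small point to watch is to confirm that the two scalar conditions $d=1-a$ and $bc=a-a^2$ really do capture \emph{every} matrix similar to $\left(\begin{array}{cc}1&0\\0&0\end{array}\right)$, rather than a proper subset; this is what the entry-wise verification in the previous paragraph guarantees.
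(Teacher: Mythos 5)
Your proposal is correct and follows essentially the same route as the paper: specialize Theorem~\ref{31} with $J(R)=0$ so the only nontrivial case is similarity to $\left(\begin{smallmatrix}1&0\\0&0\end{smallmatrix}\right)$, extract the entry conditions (the paper reads off $\mathrm{tr}(A)=1$ and $\det(A)=0$, you read off $\mathrm{tr}(A)=1$ and $A^2=A$, which is the same elementary computation), and prove the converse by checking directly that such a matrix is idempotent, hence strongly J-clean.
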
 \begin{proof}
$\Longrightarrow$ In view of Theorem~\ref{31}, $A$ is strongly
J-clean if and only if $A=0$ or $I_2$, or $A$ is similar to
$diag(1,0)$. If $UAU^{-1}=diag(1,0)$ for a $U\in GL_2(R)$, then
$det(A)=0$ and $tr(A)=1$. Thus, $$A=
\left( \begin{array}{cc} a&b\\
c&1-a \end{array}\right)~\mbox{with}~bc=a-a^2.$$

$\Longleftarrow$ One easily checks that $A$ is an idempotent
matrix, and then strongly J-clean.\end{proof}

\begin{lem} \cite[Theorem 2.1]{C}\label{33} Let
$E=end(_RM)$, and let $\alpha\in E$. Then the following are
equivalent:\end{lem}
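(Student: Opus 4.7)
The lemma (cited from \cite{C}) evidently asserts the equivalence between strong J-cleanness of $\alpha$ in the endomorphism ring $E = end(_R M)$ and a module-theoretic decomposition of $M$ respected by $\alpha$. The natural formulation to expect is: $\alpha$ is strongly J-clean if and only if there is a direct sum decomposition $M = P \oplus Q$ with $\alpha(P) \subseteq P$, $\alpha(Q) \subseteq Q$, $(1_P - \alpha|_P) \in J(end(P))$, and $\alpha|_Q \in J(end(Q))$ (possibly phrased in several equivalent forms). The plan is to translate the algebraic decomposition $\alpha = e + w$ into the module decomposition $M = eM \oplus (1-e)M$ and back, using the Peirce decomposition of $E$ along a suitable idempotent.

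For the forward direction, I would start from a strongly J-clean decomposition $\alpha = e + w$ with $e^2 = e \in E$, $w \in J(E)$, and $ew = we$. Put $P := eM$ and $Q := (1-e)M$, so $M = P \oplus Q$. Because $e$ commutes with $\alpha$, both summands are $\alpha$-invariant, and $\alpha|_P = 1_P + w|_P$ while $\alpha|_Q = w|_Q$. Under the canonical ring isomorphisms $end(P) \cong eEe$ and $end(Q) \cong (1-e)E(1-e)$, I would identify $w|_P$ with $ewe$ and $w|_Q$ with $(1-e)w(1-e)$; since $ew = we$, $w$ is block-diagonal in the Peirce decomposition, so the identification is clean.

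For the reverse direction, given a decomposition $M = P \oplus Q$ with the stated properties, take $e \in E$ to be the projection onto $P$ along $Q$. The invariance of both summands under $\alpha$ forces $e\alpha = \alpha e$. Setting $w := \alpha - e$, I would verify that on $P$, $w|_P = \alpha|_P - 1_P$, which lies in $J(end(P))$ by hypothesis, and on $Q$, $w|_Q = \alpha|_Q \in J(end(Q))$; again $w$ commutes with $e$, so $w$ is block-diagonal in the Peirce decomposition, and I can assemble it as an element of $J(E)$ from its two diagonal pieces.

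The central technical obstacle in both directions is the interaction between the Jacobson radical of $E$ and the Jacobson radicals of its corners $eEe$ and $(1-e)E(1-e)$. Specifically, I need the standard identity $J(eEe) = eJ(E)e$ together with the observation that a \emph{block-diagonal} element of $E$ (in the Peirce decomposition) lies in $J(E)$ if and only if each diagonal block lies in the Jacobson radical of the corresponding corner ring. Once this bookkeeping is in place, the passage between the element-level description (strongly J-clean in $E$) and the geometric description (compatible decomposition of $M$ with Jacobson-radical restrictions) is essentially automatic, and the chain of equivalences in the lemma follows directly from the Peirce decomposition applied to the idempotent $e$.
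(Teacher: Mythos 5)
The paper gives no proof of this lemma at all---it is quoted verbatim from \cite[Theorem 2.1]{C}---so there is no internal argument to compare against; your reconstruction is correct and is the standard argument behind the cited result: pass between $\alpha=e+w$ and $M=eM\oplus(1-e)M$ via the Peirce decomposition, using the identification $end(eM)\cong eEe$, the identity $J(eEe)=eJ(E)e$, and (for the converse) the fact that $eJ(E)e,(1-e)J(E)(1-e)\subseteq J(E)$ together with additivity of $J(E)$ to place the assembled block-diagonal $w=\alpha-e$ in $J(E)$. The only discrepancy is that your labeling of $P$ and $Q$ is swapped relative to the paper's statement ($\alpha|_P\in J\big(end(P)\big)$ and $(1_M-\alpha)|_Q\in J\big(end(Q)\big)$), which is immaterial.
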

\begin{enumerate}
\item [(1)]{\it $\alpha$ is strongly J-clean in $E$.}
\vspace{-.5mm}
\item [(2)]{\it $M=P\oplus Q$ where $P$ and $Q$ are $\alpha$-invariant, and $\alpha|_P\in J\big(end(P)\big)$ and
$(1_M-\alpha)|_Q\in J\big(end(Q)\big)$.}
\end{enumerate}

\begin{lem} \label{34} Let $R$ be 2-projective-free, and let $A\in M_2(R)$ be strongly J-clean. Then $A\in
J\big(M_2(R)\big)$ or $I_2-A\in J\big(M_2(R)\big)$ or $A$ is
similar to a matrix $\left(
\begin{array}{cc}
0&\lambda\\
1&\mu\end{array} \right)$, where $\lambda\in J(R),\mu\in
1+J(R)$.\end{lem}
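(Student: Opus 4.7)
The plan is to use Theorem~\ref{31} as the essential reduction. That theorem already shows that any strongly J-clean $A \in M_2(R)$ either satisfies $A \in J\big(M_2(R)\big)$, or $I_2 - A \in J\big(M_2(R)\big)$, or is similar to a diagonal matrix $D = \left(\begin{array}{cc} \alpha & 0 \\ 0 & \beta \end{array}\right)$ with $\alpha \in 1 + J(R)$ and $\beta \in J(R)$. The first two alternatives already appear in the conclusion of the lemma, so it suffices to prove that such a diagonal $D$ is itself similar to a matrix $C = \left(\begin{array}{cc} 0 & \lambda \\ 1 & \mu \end{array}\right)$ with $\lambda \in J(R)$ and $\mu \in 1 + J(R)$; composing the two similarities then yields the lemma.

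To construct the similarity $U^{-1} D U = C$, I would try the candidate
$$U = \left(\begin{array}{cc} 1 & \alpha \\ 1 & \beta \end{array}\right),$$
whose columns play the role of ``eigenvectors'' for the two diagonal entries of $D$. Expanding $DU = UC$ and equating entries yields exactly the two equations $\alpha^2 = \lambda + \alpha\mu$ and $\beta^2 = \lambda + \beta\mu$. Subtracting, $(\alpha - \beta)\mu = \alpha^2 - \beta^2$. Here noncommutativity bites: one cannot simply cancel $(\alpha-\beta)$ by an $(\alpha+\beta)$ on the right. However, $\alpha - \beta \in 1 + J(R) \subseteq U(R)$, so one may left-invert and set
$$\mu = (\alpha - \beta)^{-1}(\alpha^2 - \beta^2), \qquad \lambda = \beta^2 - \beta\mu.$$

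It then remains to check three routine points. First, $\mu \in 1 + J(R)$: indeed $(\alpha - \beta)^{-1} \in 1 + J(R)$, and $\alpha^2 - \beta^2 \in 1 + J(R)$ because $\alpha^2 \in 1 + J(R)$ while $\beta^2 \in J(R)$; their product therefore lies in $1 + J(R)$. Second, $\lambda \in J(R)$, since $\beta^2 \in J(R)$ and $\beta\mu \in J(R)(1+J(R)) \subseteq J(R)$. Third, $U \in GL_2(R)$: the row operation $R_2 \mapsto R_2 - R_1$ reduces $U$ to an upper triangular matrix with diagonal entries $1$ and $\beta - \alpha \in -1 + J(R) \subseteq U(R)$, which is invertible. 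The principal obstacle throughout is that the naive eigenvalue or trace-determinant reasoning is unavailable over a noncommutative $R$; the crux is to solve the explicit matrix equation $DU = UC$ directly, and to exploit the invertibility of the \emph{difference} $\alpha - \beta$ rather than any symmetric relation in $\alpha$ and $\beta$.
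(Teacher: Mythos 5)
Your proposal is correct and follows essentially the same route as the paper: both reduce via Theorem~\ref{31} to the diagonal matrix $D=\mathrm{diag}(\alpha,\beta)$ with $\alpha\in 1+J(R)$, $\beta\in J(R)$, and then exhibit an explicit similarity to a companion-type matrix — the paper writes its conjugating matrix as a product of elementary matrices with somewhat opaque formulas for $\lambda,\mu$, whereas you solve $DU=UC$ with $U$ having rows $(1,\alpha)$ and $(1,\beta)$, getting the cleaner $\mu=(\alpha-\beta)^{-1}(\alpha^2-\beta^2)$, $\lambda=\beta^2-\beta\mu$. The one step you leave implicit — that these choices also satisfy the first equation $\alpha^2=\lambda+\alpha\mu$ — is immediate, since $\lambda+\alpha\mu=\beta^2+(\alpha-\beta)\mu=\alpha^2$, so there is no gap.
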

\begin{proof} Suppose that $A,I_2-A\not\in J\big(M_2(R)\big)$. By virtue of Theorem~\ref{31}, we have a $P\in GL_2(R)$ such that
$PAP^{-1}=\left(
\begin{array}{cc}
1+\alpha&0\\
0&\beta\end{array} \right)$, where $\alpha,\beta\in J(R)$. Thus,
we check that
$$UAU^{-1}=\left(
\begin{array}{cc}
0&-(1+\alpha)(1+\alpha-\beta)^{-1}\beta(1+\alpha-\beta)\\
1&(1+\alpha-\beta)^{-1}\beta(1+\alpha-\beta)+(1+\alpha)\end{array}
\right),$$ where $$U=\left(
\begin{array}{cc}
1&-1-\alpha\\
0&1\end{array} \right)\left(
\begin{array}{cc}
1&-1-\alpha\\
0&1\end{array} \right)\left(
\begin{array}{cc}
1&0\\
0&(1+\alpha-\beta)^{-1}\end{array} \right)\left(
\begin{array}{cc}
1&0\\
1&1\end{array} \right).$$ Set
$\lambda=-(1+\alpha)(1+\alpha-\beta)^{-1}\beta(1+\alpha-\beta)$
and $\mu=(1+\alpha-\beta)^{-1}\beta(1+\alpha-\beta)+(1+\alpha)$.
Then $\lambda\in J(R)$ and $\mu\in 1+J(R)$, as desired.\end{proof}

We come now to the main result of this section.

\begin{thm} \label{35} Let $R$ be
2-projective-free. Then $A\in M_2(R)$ is strongly J-clean if and
only if \end{thm}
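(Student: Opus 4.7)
The plan is to establish the biconditional by combining Theorem~\ref{31} with Lemma~\ref{34}, reducing each direction to a similarity involving a diagonal matrix.

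For the forward direction, I would assume $A$ is strongly J-clean. If $A\in J(M_2(R))$ or $I_2-A\in J(M_2(R))$, the conclusion is immediate. Otherwise, by Theorem~\ref{31} and Lemma~\ref{34}, $A$ is similar to $B = \left(\begin{array}{cc}0&\lambda\\1&\mu\end{array}\right)$ with $\lambda$ and $\mu$ given by the explicit expressions from the proof of Lemma~\ref{34} in terms of $a\in 1+J(R)$ and $b\in J(R)$; namely, $\lambda = -a(a-b)^{-1}b(a-b)$ and $\mu = (a-b)^{-1}b(a-b)+a$. I would then verify by direct algebraic substitution that both $x=a$ and $x=b$ satisfy $x^2-x\mu-\lambda=0$, which supplies the required root in $1+J(R)$ and root in $J(R)$.

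For the backward direction, the cases $A\in J(M_2(R))$ and $I_2-A\in J(M_2(R))$ are immediate through the decompositions $A=0+A$ and $A=I_2+(A-I_2)$. In the main case, $A$ is similar to $B = \left(\begin{array}{cc}0&\lambda\\1&\mu\end{array}\right)$ and the quadratic $x^2-x\mu-\lambda=0$ admits roots $\alpha\in J(R)$, $\beta\in 1+J(R)$. I would set $U = \left(\begin{array}{cc}1&\alpha\\1&\beta\end{array}\right)$ and argue that $U\in GL_2(R)$ because the row operation $R_2\mapsto R_2-R_1$ turns $U$ into an upper triangular matrix with diagonal $(1,\beta-\alpha)$, and $\beta-\alpha\in 1+J(R)$ is a unit. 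A direct multiplication will show $UB = diag(\alpha,\beta)\,U$, the essential inputs being $\alpha^2=\alpha\mu+\lambda$ and $\beta^2=\beta\mu+\lambda$. Consequently $UBU^{-1} = diag(\alpha,\beta) = diag(0,1) + diag(\alpha,\beta-1)$ is a strong J-clean decomposition (commuting idempotent plus Jacobson element), and since strong J-cleanness is preserved under similarity, $A$ is strongly J-clean.

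The main technical obstacle I anticipate is verifying in the forward direction that $b\in J(R)$ satisfies $x^2-x\mu-\lambda=0$: the substitution $b\mu+\lambda$ produces a conjugate-type expression $(a-b)^{-1}b(a-b)$ whose outer factor $(a-b)$ only cancels after invoking the elementary identity $b-a=-(a-b)$. The $a$-case collapses immediately by cancellation, and the backward direction is routine matrix algebra.
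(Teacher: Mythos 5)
Your proof is correct, but it follows a genuinely different route from the paper's, in both directions. For the forward implication the paper does not compute with the explicit $\lambda,\mu$ at all: it invokes Lemma~\ref{33} to split $2R=C\oplus D$ into $B$-invariant summands, uses 2-projective-freeness a second time to see $C\cong D\cong R$, normalizes bases to the form $(1,\alpha)$, $(1,\beta)$ by a reduction-mod-$J(R)$ and direct-finiteness argument, and reads off the roots from the invariance equations $(1,\alpha)B=r(1,\alpha)$. You instead take the concrete $\lambda=-a(a-b)^{-1}b(a-b)$, $\mu=(a-b)^{-1}b(a-b)+a$ produced inside the proof of Lemma~\ref{34} and verify directly that $a\in 1+J(R)$ and $b\in J(R)$ satisfy $x^2-x\mu-\lambda=0$; this is a legitimate shortcut (condition (3) only asks for one such similarity), with the minor caveat that you are reusing the construction in the proof of Lemma~\ref{34} rather than its statement. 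For the converse, the paper again argues module-theoretically: it sets $C=R(1,c)$, $D=R(1,d)$, checks $2R=C\oplus D$, shows $B|_C\in J\big(end(C)\big)$ and $(I_2-B)|_D\in J\big(end(D)\big)$ by hand, and applies Lemma~\ref{33}; you instead exhibit the explicit invertible $U$ with rows $(1,\alpha)$ and $(1,\beta)$ (invertible since $\beta-\alpha\in 1+J(R)$), check $UB=diag(\alpha,\beta)U$ using exactly the two root equations, and conclude via the easy direction of Theorem~\ref{31} together with similarity-invariance of strong J-cleanness. Your version is shorter and purely matrix-computational, avoiding Lemma~\ref{33} and the second appeal to projective-freeness; the paper's version is more structural and explains where the roots come from (bases of the invariant summands), which is also what generalizes beyond the $2\times 2$ setting. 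Both arguments are sound.
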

\begin{enumerate}
\item [(1)]{\it $A\in J\big(M_2(R)\big)$, or} \vspace{-.5mm}
\item [(2)]{\it  $I_2-A\in J\big(M_2(R)\big)$, or}\vspace{-.5mm}
\item [(3)]{\it  $A$ is
similar to $\left(
\begin{array}{cc}
0&\lambda\\
1&\mu\end{array} \right)$ where $\lambda\in J(R),\mu\in 1+J(R)$,
and the equation $x^2-x\mu -\lambda=0$ has a root in $J(R)$ and a
root in $1+J(R)$.}
\end{enumerate}\vspace{-.5mm}  \begin{proof} Suppose that $A\in M_2(R)$ is strongly J-clean, and that
$A,I_2-A\not\in J\big(M_2(R)\big)$. It follows by Lemma~\ref{34}
that $A$ is similar to the matrix $B=\left(
\begin{array}{cc}
0&\lambda\\
1&\mu\end{array} \right)$ where $\lambda\in J(R),\mu\in 1+J(R)$.
Hence, $B\in M_2(R)$ is strongly J-clean. In view of
Lemma~\ref{33}, we have $2R=C\oplus D$ where $(I_2-B)|_C\in
J\big(end(C)\big)$ and $B|_D\in J\big(end(D)\big)$. Thus, $B|_C\in
aut(C)$ and $(I_2-B)|_D\in aut(D)$. Since $R$ is
2-projective-free, it follows by Proposition~\ref{21} that $C$ and
$D$ are free. As $B,I_2-B\not\in J\big(M_2(R)\big)$, we see that
$C,D\cong R$. Assume that $(a,b)$ and $(c,d)$ are bases of $C$ and
$D$, respectively. Then $C=R(a,b), D=R(c,d)$. Then
$$R(a,b)\left(
\begin{array}{cc}
0&\lambda\\
1&\mu\end{array} \right)=R(a,b).$$ Set $\overline{R}=R/J(R)$. Then
$$\overline{R}(\overline{a},\overline{b})\subseteq
\overline{R}(\overline{1},\overline{1}).$$ Similarly,
$$\overline{R}(\overline{c},\overline{d})\subseteq
\overline{R}(\overline{1},\overline{0}).$$ Write
$(\overline{a},\overline{b})=s(\overline{1},\overline{1})$ and
$(\overline{c},\overline{d})=t(\overline{1},\overline{0})$. Then
$$(\overline{1},\overline{1})=z(\overline{a},\overline{b})+z'(\overline{c},\overline{d})=zs(\overline{1},\overline{1})+z't(\overline{1},\overline{0}).$$
This implies that $1-zs\in J(R)$, and so $s\in R$ is left
invertible. Hence, $s\in U(R)$, as $R$ is directly finite.
Clearly, $a-s,b-s\in J(R)$, and so $1-a^{-1}b\in J(R)$.
$C=R(a,b)=R(1,\alpha),$ where $\alpha=a^{-1}b\in 1+J(R)$.
Analogously, $D=R(1,\beta)$, where $\beta=c^{-1}d\in J(R)$. As $C$
is $B$-invariant, we see that $$(1,\alpha)\left(
\begin{array}{cc}
0&\lambda\\
1&\mu\end{array} \right)=r(1,\alpha)$$ for some $r\in R$. It
follows that $\alpha=r$ and $\lambda+\alpha\mu=r\alpha$, and
therefore $\alpha^2-\alpha\mu -\lambda=0$, i.e.,
$x^2-x\mu-\lambda=0$ has a root $\alpha\in 1+J(R)$. Likewise, this
equation has a root $\beta\in J(R)$, as desired.

Conversely, if $(1)$ or $(2)$ holds then $A\in M_2(R)$ is strongly
J-clean, and so we assume $(3)$ holds. As strong J-cleanness is
invariant under similarity, we will suffice to check if $B=\left(
\begin{array}{cc}
0&\lambda\\
1&\mu
\end{array}
\right)$ is strongly J-clean. By hypothesis, the equation
$x^2-x\mu -\lambda=0$ has roots $c\in J(R)$ and $d\in 1+J(R)$.
Then $c^2-c\mu -\lambda=0$ and $d^2-d\mu -\lambda=0$. Choose
$C=R(1, c)$ and $D=R(1, d)$. Since $$(1,c)\left(
\begin{array}{cc}
0&\lambda\\
1&\mu
\end{array}
\right)=c(1,c)\in C,$$ $C$ is $B$-invariant. Similarly, $D$ is
$B$-invariant. If $r(1,c)=s(1,d)\in C\bigcap D$, then $r=s$ and
$rc=sd$; hence, $r(c-d)=0$. Since $c-d\in U(R)$, we get $r=0$.
Thus, $C\bigcap D=0$. Let $(a,b)\in 2R$. Choose
$s=(b-ac)(d-c)^{-1}$ and $r=a-s$. Then $(a,b)=r(1,c)+s(1,d)\in
C\oplus D$. Hence, $2R=C\oplus D$. Let $\gamma\in end(C)$. Then
$$\begin{array}{llll} 1_C-B|_C\gamma:
&C&\to& C;\\
&r(1,c)&\mapsto&r(1,c)-rc(1,c)\gamma.
\end{array}
$$ Write $(1,c)\gamma =b(1,c)$ for a $b\in R$. If
$\big(r(1,c)\big)\big(1_C-B|_C\gamma\big)=0$, then
$r(1,c)-rcb(1,c)=0$; hence, $r(1-cb)(1,c)=0$. It follows from
$c\in J(R)$ that $r=0$, and so $r(1,c)=0$. Thus, $1_C-B|_C\gamma$
is monomorphic. For any $r(1,c)\in C$, we see that
$$\big(r(1-cb)^{-1}(1,c)\big)\big(r(1,c)\big)\big(1_C-B|_C\gamma\big)=r(1,c).$$
This implies that $1_C-B|_C\gamma$ is epimorphic. As a result,
$1_C-B|_C\gamma$ is isomorphic. We infer that $B|_{C}\in
J\big(end(C)\big).$ Similarly, $\big(I_2-B\big)|_{D}\in
J\big(end(D)\big).$ In light of Lemma~\ref{34}, $B\in M_2(R)$ is
strongly J-clean, as needed.\end{proof}

\vskip4mm A matrix $A\in M_2(R)$ is {\it cyclic} if there exists a
column $\alpha$ such that $(\alpha,A\alpha)\in GL_2(R)$. For
instance, $\left(
\begin{array}{cc}
*&*\\
u&*
\end{array}
\right)\in M_2(R)$ is cyclic for any $u\in U(R)$.

\begin{cor} \label{36} Let $R$
be a 2-projective-free ring, and let $A\in M_2(R)$. If $R$ is
commutative, then $A$ is strongly J-clean if and only if\end{cor}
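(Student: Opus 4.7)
The plan is to deduce the corollary from Theorem~\ref{35} by exploiting commutativity to replace the ``similar to a companion matrix'' clause of condition~(3) by the direct condition that the characteristic polynomial $\chi_A(x)=x^2-tr(A)\,x+det(A)$ has a root in $J(R)$ and a root in $1+J(R)$.

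For the forward direction, Theorem~\ref{35} handles the first two cases verbatim. If $A$ is similar to $B=\left(\begin{array}{cc}0&\lambda\\1&\mu\end{array}\right)$ with $\lambda\in J(R),\mu\in 1+J(R)$, a direct computation gives $\chi_B(x)=x^2-\mu x-\lambda$. Commutativity makes $\chi_A$ invariant under similarity, so $\chi_A=\chi_B$, and the hypothesis of Theorem~\ref{35}(3) yields the two roots for $\chi_A$.

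For the converse, cases (1) and (2) are immediate from Theorem~\ref{35}. So assume $\chi_A$ has roots $\alpha\in J(R)$ and $\beta\in 1+J(R)$. Since $\beta-\alpha\in 1+J(R)\subseteq U(R)$, I would set
\[E=(\beta-\alpha)^{-1}(A-\alpha I_2)\in M_2(R).\]
Cayley--Hamilton, valid here because $R$ is commutative, gives $(A-\alpha I_2)(A-\beta I_2)=\chi_A(A)=0$. Expanding
\[(A-\alpha I_2)^2=(A-\alpha I_2)\bigl[(A-\beta I_2)+(\beta-\alpha)I_2\bigr]=(\beta-\alpha)(A-\alpha I_2)\]
then yields $E^2=E$. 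As a polynomial in $A$, the matrix $E$ commutes with $A$, and $A-E=\alpha I_2+(\beta-\alpha-1)E$ lies in $J(M_2(R))$ because $\alpha,\beta-\alpha-1\in J(R)$. Thus $A=E+(A-E)$ is the desired strongly J-clean decomposition.

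The main obstacle is precisely the idempotency of $E$: this is where commutativity of $R$ plays its essential role, both via the Cayley--Hamilton identity in matrix form and in allowing the scalar $(\beta-\alpha)^{-1}$ to move freely through matrix products. This also explains why condition~(3) of Theorem~\ref{35} cannot be simplified in this way in the noncommutative setting---the mere existence of the two roots of the equation need not produce a commuting idempotent, so one still has to keep the similarity-to-companion-matrix hypothesis in the general case.
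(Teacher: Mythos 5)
Your proposal is correct in substance, and its converse direction takes a genuinely different, more elementary route than the paper's. The paper proves the converse by using the cyclicity hypothesis together with ~\cite[Lemma 7.4.6]{CH} to make $A$ similar to the companion matrix of Theorem~\ref{35}(3), then checks $\mu=\alpha+\beta\in 1+J(R)$ and $\lambda=-\alpha\beta\in J(R)$ and appeals to Theorem~\ref{35}; you instead construct the commuting idempotent directly, $E=(\beta-\alpha)^{-1}(A-\alpha I_2)$, via Cayley--Hamilton. Your route is self-contained, bypasses the module-theoretic converse of Theorem~\ref{35} entirely, and in fact shows that for the converse neither cyclicity nor 2-projective-freeness is needed: the quadratic having a root in $J(R)$ and a root in $1+J(R)$ already forces $A$ to be strongly J-clean over any commutative ring. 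That is a real gain in generality over the paper's argument, which the paper's forward direction (identical to yours up to one point noted below) then complements.

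Two small repairs are needed. First, the identity $(A-\alpha I_2)(A-\beta I_2)=\chi_A(A)$ silently assumes $tr(A)=\alpha+\beta$ and $det(A)=\alpha\beta$; this must be extracted from the two root equations by subtracting them, factoring out $\alpha-\beta$ (commutativity), and cancelling it as a unit, since $\beta-\alpha\in 1+J(R)\subseteq U(R)$ --- exactly the computation the paper performs explicitly. Second, condition (3) of the corollary also asserts that $A$ is \emph{cyclic}, and your forward direction never verifies this. It is immediate: by Theorem~\ref{35}, $A=U^{-1}BU$ with $B$ the companion matrix, and the column $(1,0)^T$ satisfies $\big((1,0)^T,B(1,0)^T\big)=I_2\in GL_2(R)$, so $B$, hence $A$, is cyclic (the paper simply cites ~\cite[Lemma 7.4.6]{CH} here). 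With those two sentences added, your proof is complete.
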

\begin{enumerate}
\item [(1)]{\it $A\in J\big(M_2(R)\big)$, or} \vspace{-.5mm}
\item [(2)]{\it  $I_2-A\in J\big(M_2(R)\big)$, or}\vspace{-.5mm}
\item [(3)]{\it  $A$ is cyclic and $x^2-tr(A)x+det(A)=0$ has a root in $J(R)$ and a
root in $1+J(R)$.}
\end{enumerate}\vspace{-.5mm} \begin{proof} Suppose that $A$ is strongly J-clean. If
$A, I_2-A\not\in J\big(M_2(R)\big)$, then $A$ is similar to
$\left(
\begin{array}{cc}
0&\lambda\\
1&\mu\end{array} \right)$ where $\lambda\in J(R),\mu\in 1+J(R)$,
and the equation $x^2-x\mu -\lambda=0$ has a root in $J(R)$ and a
root in $1+J(R)$, by Theorem~\ref{35}. In view of ~\cite[Lemma
7.4.6]{CH}, $A$ is cyclic. As $R$ is commutative, we see that
$tr(A)=\mu$ and $det(A)=-\lambda$, and so $x^2-tr(A)x+det(A)=0$
has a root in $J(R)$ and a root in $1+J(R)$.

Conversely, if $A\in J\big(M_2(R)\big)$ or $I_2-A\in
J\big(M_2(R)\big)$ then $A$ is strongly J-clean. We now assume
that $A$ is cyclic and $x^2-tr(A)x+det(A)=0$ has a root $\alpha$
in $J(R)$ and a root $\beta$ in $1+J(R)$. In view of ~\cite[Lemma
7.4.6]{CH}, $A$ is isomorphic to a companion matrix $\left(
\begin{array}{cc}
0&\lambda\\
1&\mu\end{array} \right)$. This shows that $\mu=tr(A)$ and
$det(A)=-\lambda$. Since
$$\alpha^2-tr(A)\alpha+det(A)=0~\mbox{and}~\beta^2-tr(A)\beta+det(A)=0,$$
we get $tr(A)=\alpha+\beta$ and $det(A)=\alpha\beta$. Hence,
$\mu=\alpha+\beta\in 1+J(R)$ and $\lambda=-\alpha\beta\in J(R)$.
Therefore we complete the proof, by Theorem~\ref{35}.\end{proof}

Let $R$ be a PSF ring, and let $A\in M_2(R)$. It follows from
Corollary~\ref{36} that $A$ is strongly J-clean if and only if
$A\in J\big(M_2(R)\big)$, or $I_2-A\in J\big(M_2(R)\big)$, or $A$
is cyclic and $x^2-tr(A)x+det(A)=0$ has a root in $J(R)$ and a
root in $1+J(R)$. The following is a dual of ~\cite[Theorem
2.5]{C}.

\begin{cor} \label{37} Let $R$ be
a local ring. Then $A\in M_2(R)$ is strongly J-clean if and only
if \end{cor}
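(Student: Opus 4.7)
The plan is to deduce this corollary directly from Theorem~\ref{35}, since every local ring is 2-projective-free (this is noted explicitly in the introduction's list of examples). Thus the three-way characterization of Theorem~\ref{35} — (1) $A\in J(M_2(R))$, (2) $I_2-A\in J(M_2(R))$, or (3) $A$ similar to a companion matrix $\bigl(\begin{smallmatrix} 0&\lambda\\ 1&\mu\end{smallmatrix}\bigr)$ with $\lambda\in J(R)$, $\mu\in 1+J(R)$, and $x^2-x\mu-\lambda=0$ having one root in $J(R)$ and one in $1+J(R)$ — applies verbatim to $A\in M_2(R)$. No new J-cleanness argument is required at this step; what remains is bookkeeping to match the statement of [Theorem~2.5]{C}, whose ``dual'' form the corollary is meant to present.

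The steps I would carry out, in order, are: first, quote the fact that local rings lie in the class of 2-projective-free rings, so Theorem~\ref{35} applies; second, transcribe alternatives (1) and (2) without change; third, handle alternative (3) by conjugating the companion matrix $\bigl(\begin{smallmatrix}0&\lambda\\ 1&\mu\end{smallmatrix}\bigr)$ by a suitable unit in $GL_2(R)$ — for instance the permutation $\bigl(\begin{smallmatrix}0&1\\ 1&0\end{smallmatrix}\bigr)$, possibly composed with a small elementary matrix — to convert it into whatever companion form appears in [Theorem~2.5]{C}; fourth, verify that the associated quadratic condition transforms correspondingly, using that for a cyclic $2\times 2$ matrix the characteristic polynomial (and hence the relevant quadratic) is a similarity invariant up to the sign conventions of trace and determinant. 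The converse direction is obtained by reversing the similarity.

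The main conceptual obstacle is minimal here; the substantive work was already done in Theorem~\ref{35}. The only genuine care is in reconciling notation with the reference: one must confirm that the ``dual'' reformulation in [Theorem~2.5]{C} really is a similarity rephrasing of condition (3) above, and that the roots $\alpha\in J(R)$, $\beta\in 1+J(R)$ of the quadratic satisfy $\mu=\alpha+\beta$ and $-\lambda=\alpha\beta$ so that the translation of trace/determinant data between the two companion forms is consistent. Since $R$ is local, every element is either a unit or in $J(R)$, which makes the split ``one root in $J(R)$, one root in $1+J(R)$'' a clean dichotomy and rules out ambiguity in placing the roots.
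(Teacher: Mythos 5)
Your proposal misses the actual content of this corollary. Look closely at the quadratic in condition (3): in Theorem~\ref{35} it is $x^2-x\mu-\lambda=0$ (coefficient $\mu$ acting on the \emph{right}), while in Corollary~\ref{37} it is $x^2-\mu x-\lambda=0$ (coefficient on the \emph{left}). The local ring here is not assumed commutative, so these are genuinely different conditions, and the corollary is explicitly billed as a \emph{dual} of \cite[Theorem 2.5]{C}. Your plan — quote that local rings are 2-projective-free, say Theorem~\ref{35} ``applies verbatim,'' and then reconcile the two companion forms by conjugating by a permutation or elementary matrix and invoking invariance of the characteristic polynomial — does not bridge this gap. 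Over a noncommutative ring there is no characteristic-polynomial argument available, and the relations $\mu=\alpha+\beta$, $-\lambda=\alpha\beta$ you rely on fail for roots of $x^2-x\mu-\lambda=0$: subtracting $\alpha^2-\alpha\mu-\lambda=0$ and $\beta^2-\beta\mu-\lambda=0$ gives $\mu=(\alpha-\beta)^{-1}(\alpha^2-\beta^2)$, which is not $\alpha+\beta$ in general. Those trace/determinant identities are exactly what Corollary~\ref{36} uses, but only under a commutativity hypothesis. No similarity of $2\times 2$ matrices converts a right-sided root condition into a left-sided one.

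The paper's proof instead passes to the opposite ring: if $A=E+W$ is a strongly J-clean decomposition, then $(A^o)^T=(E^o)^T+(W^o)^T$ is a strongly J-clean decomposition in $M_2(R^{op})$; since $R$ is local, $R^{op}$ is also local, hence 2-projective-free, so Theorem~\ref{35} applies to $(A^o)^T$, and translating the right-sided equation over $R^{op}$ back to $R$ produces precisely the left-sided equation $x^2-\mu x-\lambda=0$. This also explains why the corollary is stated for local rings rather than for arbitrary 2-projective-free rings: locality is a left-right symmetric hypothesis, whereas 2-projective-freeness of $R$ does not obviously pass to $R^{op}$, so your ``it is just bookkeeping over any 2-projective-free ring'' reading would prove something the paper does not claim. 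To repair your argument you would need to insert the transpose/opposite-ring step (or an equivalent left-module version of the invariant-submodule analysis in Theorem~\ref{35}); as written, the key step fails.
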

\begin{enumerate}
\item [(1)]{\it $A\in J\big(M_2(R)\big)$, or} \vspace{-.5mm}
\item [(2)]{\it  $I_2-A\in J\big(M_2(R)\big)$, or}\vspace{-.5mm}
\item [(3)]{\it  $A$ is
similar to $\left(
\begin{array}{cc}
0&\lambda\\
1&\mu\end{array} \right)$ where $\lambda\in J(R),\mu\in 1+J(R)$,
and the equation $x^2-\mu x -\lambda=0$ has a root in $J(R)$ and a
root in $1+J(R)$.}
\end{enumerate}\vspace{-.5mm}  \begin{proof} Suppose that $A\in M_2(R)$ is strongly
J-clean. Then there exist an idempotent $E\in M_2(R)$ and $W\in
M_2(J(R))$ such that $A=E+W$ and $AE=EA$. Hence,
$(A^o)^T=(E^o)^T+(W^o)^T$. One easily checks that $(E^o)^T\in
M_2(R^{op})$ is an idempotent matrix and $(W^o)^T\in
M_2(J(R^{op}))$. Furthermore, $(A^o)^T(E^o)^T=(E^o)^T(A^o)^T$.
Therefore, $(A^{o})^T\in M_2(R^{op})$ is strongly J-clean.
Clearly, $R^{op}$ is local, and then it is 2-projective-free.
Applying Theorem~\ref{36} to $(A^{o})^T\in M_2(R^{op})$. Then
$(A^{o})^T\in J\big(M_2(R^{op})\big)$, or $I_2^{o}-(A^{o})^T\in
J\big(M_2(R)\big)$, or $(A^{o})^T$ is similar to $\left(
\begin{array}{cc}
0^{o}&\lambda^{o}\\
1^{o}&\mu^{o}\end{array} \right)$ where $\lambda^{o}\in
J(R^{op}),\mu\in 1^{o}+J(R^{op})$, and the equation $x^2-x\mu^{o}
-\lambda^{o}=0$ has a root in $J(R^{op})$ and a root in
$1^{o}+J(R^{op})$. Thus, $A\in J\big(M_2(R)\big)$, or $I_2-A\in
J\big(M_2(R)\big)$, or $A$ is similar to $\left(
\begin{array}{cc}
0&\lambda\\
1&\mu\end{array} \right)$ where $\lambda\in J(R),\mu\in 1+J(R)$,
and the equation $x^2-\mu x-\lambda=0$ has a root in $J(R)$ and a
root in $1+J(R)$.

The converse is proved by a similar route.\end{proof}

\section{Power Series Rings}

\vskip4mm This section is concern on strongly J-clean matrices
over power series rings. We say that an element $a\in R$ is {\it
optimally J-clean} provided that there exists an idempotent $e\in
R$ such that $a-e\in J(R)$ and $ae=ea$, and that for any $b\in R$,
there exists $c\in R$ such that $[a,c]=[e,b]$. A ring $R$ is {\it
optimally J-clean} provided that every element in $R$ is optimally
J-clean. Every uniquely clean ring is optimally J-clean. In view
of ~\cite[Theorem 20]{NZ}, uniquely clean rings are strongly
J-clean. Further, they have the property that all idempotents are
central. Hence, the additional condition for optimally clean is
automatically satisfied (just take $c=a$). Strongly clean
power-series over noncommutative rings have ever been studied by
Shifflet~\cite{Sh}. We now establish strong J-cleanness of power
series over a general ring.

\begin{lem} \label{41} Let $R$ be a
ring, and let $a\in R$. Then the following are
equivalent:\end{lem}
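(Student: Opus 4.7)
Since the statement of Lemma~\ref{41} is cut off in the excerpt (we see only ``the following are equivalent:'' without the enumerated conditions), my plan has to be somewhat generic. Given the definition of optimally J-clean just introduced, and the author's announced intention to apply this to $f(x)\in R[[x]]$ from $f(0)$, I expect the lemma recasts the commutator clause ``for any $b$, there is $c$ with $[a,c]=[e,b]$'' into a surjectivity statement about the inner derivation $\ell_a-r_a\colon R\to R$, most likely in the form: $a$ is optimally J-clean $\Leftrightarrow$ $a$ is strongly J-clean with witness $e$ and $[e,R]\subseteq[a,R]$ $\Leftrightarrow$ $a$ is strongly J-clean and a pair of induced maps between the off-diagonal Peirce corners is surjective. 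The proof plan below targets equivalences of this shape.

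The backbone of the argument is the Peirce decomposition attached to the commuting idempotent $e$. Writing $R=eRe\oplus eR(1-e)\oplus(1-e)Re\oplus(1-e)R(1-e)$, the hypothesis $ae=ea$ forces $a=eae+(1-e)a(1-e)$, so $a$ is block-diagonal. A direct computation gives
\[
[e,b]=eb(1-e)-(1-e)be,
\]
which lives entirely in the off-diagonal corners $eR(1-e)\oplus(1-e)Re$. For any $c$ decomposed as $c=c_{11}+c_{12}+c_{21}+c_{22}$ in the four corners, the block-diagonal form of $a$ makes $[a,c]$ also lie in the off-diagonal corners: the $eR(1-e)$ component equals $(eae)c_{12}-c_{12}(1-e)a(1-e)$ and the $(1-e)Re$ component equals $(1-e)a(1-e)c_{21}-c_{21}(eae)$. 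So the single equation $[a,c]=[e,b]$ uncouples into two independent equations, one in each off-diagonal corner, governed respectively by the maps $\ell_{eae}-r_{(1-e)a(1-e)}$ restricted to $eR(1-e)$ and $\ell_{(1-e)a(1-e)}-r_{eae}$ restricted to $(1-e)Re$.

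With this decomposition in hand, the implications become essentially formal. For (1) $\Rightarrow$ the surjectivity condition: feeding $b=eb(1-e)$ and $b=(1-e)be$ through the optimally J-clean hypothesis yields preimages in each off-diagonal corner, giving surjectivity of the two maps above (since an arbitrary element of $eR(1-e)$ has the form $eb(1-e)$ for some $b$). For the reverse direction: given $b\in R$, split $[e,b]$ into its two off-diagonal pieces, solve each equation using the assumed surjectivities to get $c_{12}$ and $c_{21}$, and assemble $c=c_{12}+c_{21}$; then $[a,c]=[e,b]$ holds on the nose. The intermediate equivalence ``$[e,R]\subseteq[a,R]$'' is just a reformulation of the quantifier on $b$, and follows by the same off-diagonal analysis.

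The step I expect to require the most care is keeping track of the noncommutative bookkeeping in the Peirce decomposition, in particular verifying that $eae\in eRe$ and $(1-e)a(1-e)\in(1-e)R(1-e)$ act on the off-diagonal corners in the way needed so that $\ell_a-r_a$ respects the corner decomposition. A secondary subtlety is that one should record that $eae\in e+J(eRe)$ and $(1-e)a(1-e)\in J((1-e)R(1-e))$, since the subsequent section uses exactly this to lift optimally J-clean elements from $R$ to $R[[x]]$ via the $wb$-ring hypothesis; the lemma presumably states the equivalences in a form tailored to that lifting, so I would phrase each equivalent condition so that it is manifestly preserved when passing from $f(0)$ to $f(x)$.
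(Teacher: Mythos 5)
The lemma's actual second condition is simply that the witness $c$ may be chosen in $eR(1-e)+(1-e)Re$ with $[a,c]+[e,b]=0$, and the paper proves it by replacing a given $c$ with $-\bigl(ec(1-e)+(1-e)ce\bigr)$ and computing $[a,ec(1-e)+(1-e)ce]=e[a,c](1-e)+(1-e)[a,c]e=[e,b]$ (the converse being just $c'=-c$); this off-diagonal Peirce projection, resting on the facts that $ae=ea$ makes $[a,\cdot\,]$ preserve the Peirce corners and that $[e,b]$ is purely off-diagonal, is exactly the mechanism in your plan. So your proposal is correct and takes essentially the same route; your further reformulation as surjectivity of the two corner maps $\ell_{eae}-r_{(1-e)a(1-e)}$ and $\ell_{(1-e)a(1-e)}-r_{eae}$ is a valid equivalent statement but more than the lemma actually asserts.
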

\begin{enumerate}
\item [(1)]{\it $a\in R$ is optimally J-clean.}
\vspace{-.5mm}
\item [(2)]{\it There exists an idempotent $e\in R$ such that
$a-e\in J(R)$ and $ae=ea$, and that for any $b\in R$, there exists
$c\in eR(1-e)+(1-e)Re$ such that $[a,c]+[e,b]=0$.} \vspace{-.5mm}
\end{enumerate} \begin{proof} $(1)\Rightarrow (2)$ Since $a\in R$ is optimally
J-clean, there exists an idempotent $e\in R$ such that $a-e\in
J(R)$ and $ae=ea$, and that for any $b\in R$, there exists $c\in
R$ such that $[a,c]=[e,b]$. It is easy to check that
$$\begin{array}{lcl}
[a,ec(1-e)+(1-e)ce]&=&[a,ec(1-e)]+[a,(1-e)ce]\\
&=&e[a,c](1-e)+(1-e)[a,c]e\\
&=&e[e,b](1-e)+(1-e)[e,b]e\\
&=&[e,b], \end{array}$$ and therefore
$$[a,-ec(1-e)-(1-e)ce]+[e,b]=0.$$

$(2)\Rightarrow (1)$ There exists an idempotent $e\in R$ such that
$a-e\in J(R)$ and $ae=ea$, and that for any $b\in R$, there exists
$c\in eR(1-e)+(1-e)Re$ such that $[a,c]+[e,b]=0$. Choose $c'=-c$.
Then $[a,c']=[e,b]$, as required.
\end{proof}

\vskip4mm The following two lemmas are taken from a thesis and not
a published paper~\cite{Sh}, and so we include simple proofs to
indicate how to get these results

\begin{lem} ~\cite[Lemma 3.2.1]{Sh} \label{42} Let $R$ be a
ring, and let $n\geq 2$. If $e_0=e_0^2\in R$ and
$e_k(1-e_0)=\sum\limits_{i=0}^{k-1}e_ie_{k-i} (0< k< n)$, then
$e_0\big(\sum\limits_{i=1}^{n-1}e_ie_{n-i}\big)=\big(\sum\limits_{i=1}^{n-1}e_ie_{n-i}\big)e_0.$\end{lem}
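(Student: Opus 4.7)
The plan is to view the hypothesis through the lens of power series: set
$e(x) := e_0 + e_1 x + \cdots + e_{n-1} x^{n-1} \in R[[x]],$
and observe that the given recurrences are exactly the statement that $e(x)$ is an \emph{idempotent modulo} $x^n$. Concretely, the coefficient of $x^k$ in $e(x)^2$ is $e_0 e_k + \bigl(\sum_{i=1}^{k-1} e_i e_{k-i}\bigr) + e_k e_0$, so the identity $e_k(1-e_0) = \sum_{i=0}^{k-1} e_i e_{k-i}$ is equivalent to $e_k = e_0 e_k + e_k e_0 + \sum_{i=1}^{k-1} e_i e_{k-i}$, i.e.\ the coefficient of $x^k$ in $e(x)^2 - e(x)$ vanishes for $0 \le k < n$.

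Define $f(x) := e(x)^2 - e(x) \in R[[x]]$. By the previous paragraph $f(x) = \sum_{k \ge n} f_k x^k$, and since $e(x)$ has no terms in degree $\ge n$, the coefficient $f_n$ is precisely
$f_n = \sum_{i=1}^{n-1} e_i e_{n-i},$
which is the expression we want to prove central with respect to $e_0$. The key algebraic input is the trivial identity
$e(x) f(x) = e(x)\bigl(e(x)^2 - e(x)\bigr) = e(x)^3 - e(x)^2 = \bigl(e(x)^2 - e(x)\bigr)e(x) = f(x)\, e(x)$
in $R[[x]]$; so $e(x)$ and $f(x)$ commute as power series.

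Finally I extract the coefficient of $x^n$ on both sides of $e(x) f(x) = f(x) e(x)$. Because $f_j = 0$ for $j < n$ and $e_i = 0$ for $i \ge n$, the only surviving contribution on the left is $e_0 f_n$, and the only surviving contribution on the right is $f_n e_0$. Substituting the formula for $f_n$ yields exactly $e_0 \bigl(\sum_{i=1}^{n-1} e_i e_{n-i}\bigr) = \bigl(\sum_{i=1}^{n-1} e_i e_{n-i}\bigr) e_0$, as required.

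The only real obstacle is conceptual, namely spotting this framework. A direct assault that expands $[e_0, S_n]$ where $S_n = \sum_{i=1}^{n-1} e_i e_{n-i}$, using $e_0 e_i = e_i - e_i e_0 - S_i$ and the mirror identity $e_i e_0 = e_i - e_0 e_i - S_i$, quickly reduces $[e_0, S_n]$ to itself — a tautology — so one must instead recognize $S_n$ as the leading obstruction to $e(x)$'s idempotency and exploit the universal commutation $e\cdot(e^2-e) = (e^2-e)\cdot e$.
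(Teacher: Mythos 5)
Your argument is correct. The paper dismisses this lemma with the single word ``Straightforward'' (deferring to Shifflet's thesis), the intended proof being a direct element-wise computation inside $R$; you instead encode the hypotheses as the statement that $e(x)=e_0+e_1x+\cdots+e_{n-1}x^{n-1}$ satisfies $e(x)^2\equiv e(x)\ (\mathrm{mod}\ x^n)$ and read off the coefficient of $x^n$ in the universal identity $e(x)\bigl(e(x)^2-e(x)\bigr)=\bigl(e(x)^2-e(x)\bigr)e(x)$. The bookkeeping checks out: $e_k(1-e_0)=\sum_{i=0}^{k-1}e_ie_{k-i}$ is exactly the vanishing of the $x^k$-coefficient of $f(x)=e(x)^2-e(x)$ for $0<k<n$ (the case $k=0$ being $e_0^2=e_0$), the coefficient $f_n$ equals $\sum_{i=1}^{n-1}e_ie_{n-i}$ because $e(x)$ has degree at most $n-1$, and extracting the $x^n$-coefficient from $ef=fe$ gives $e_0f_n=f_ne_0$. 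Your route buys brevity and a conceptual explanation -- $S_n=\sum_{i=1}^{n-1}e_ie_{n-i}$ is the leading obstruction to idempotency of $e(x)$ and must commute with $e_0$ because $e$ commutes with $e^2-e$ -- while the computational route stays entirely inside $R$. One small quibble: your closing remark that a direct expansion of $[e_0,S_n]$ collapses to a tautology is not accurate. Substituting $e_0e_i=e_i-e_ie_0-S_i$ inside $e_0S_n$ and $e_{n-i}e_0=e_{n-i}-e_0e_{n-i}-S_{n-i}$ inside $S_ne_0$ reduces $[e_0,S_n]$ to $\sum_{i=1}^{n-1}e_iS_{n-i}-\sum_{i=1}^{n-1}S_ie_{n-i}$, and both sums equal $\sum_{i+j+k=n,\ i,j,k\geq 1}e_ie_je_k$, so they cancel; that hand computation is precisely your coefficient extraction unrolled, which is presumably the ``straightforward'' proof the paper has in mind.
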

\begin{proof} Straightforward.\end{proof}

\begin{lem} ~\cite[Theorem 3.2.2]{Sh} \label{43} Let $R$ be a
ring, and let $n\geq 2$. If $e_0=e_0^2\in R,
e_k(1-e_0)=\sum\limits_{i=0}^{k-1}e_ie_{k-i}$ and
$[r_0,e_k]+[r_1,e_{k-1}]+\cdots +[r_k,e_0]=0$ for all $0< k< n$.
Then
$$\big[r_0,\sum\limits_{i=1}^{n-1}e_ie_{n-i}\big]=(1-e_0)\big(\sum\limits_{i=1}^{n-1}[e_i,r_{n-i}]\big)-
\big(\sum\limits_{i=1}^{n-1}[e_i,r_{n-i}]\big)e_0.$$\end{lem}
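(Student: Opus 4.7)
Set $S_n := \sum_{i=1}^{n-1} e_i e_{n-i}$ and $T := \sum_{i=1}^{n-1}[e_i, r_{n-i}]$, so the goal is to prove $[r_0, S_n] = (1-e_0)T - Te_0$. The plan is to expand $[r_0, S_n]$ by the Leibniz rule for commutators, eliminate every occurrence of $[r_0, e_k]$ (for $0<k<n$) using the commutator hypothesis, and then reduce the resulting double sums via the idempotent recursion.

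Leibniz gives $[r_0, S_n] = \sum_{i=1}^{n-1}\bigl([r_0, e_i]\,e_{n-i} + e_i\,[r_0, e_{n-i}]\bigr)$. Substituting $[r_0, e_k] = -\sum_{j=1}^{k}[r_j, e_{k-j}]$ in both spots, and separating the terms according to whether the inner commutator is $[r_j, e_0]$ or $[r_j, e_l]$ with $l \ge 1$, I write $[r_0, S_n] = -Y - X$, where
$$Y = \sum_{j=1}^{n-1}\bigl([r_j, e_0]\,e_{n-j} + e_{n-j}\,[r_j, e_0]\bigr),$$
and $X$ collects the remaining terms, each of the form $[r_j, e_l]\,e_m$ or $e_l\,[r_j, e_m]$ with $j+l+m=n$ and $j,l,m\ge 1$. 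To evaluate $Y$, I use the consequence $e_0 e_k + e_k e_0 = e_k - S_k$ of the idempotent recursion (valid for $0<k<n$), which produces the identity $[r_j, e_0]\,e_k + e_k\,[r_j, e_0] + e_0\,[r_j, e_k] + [r_j, e_k]\,e_0 = [r_j,e_0 e_k + e_k e_0] = [r_j, e_k] - [r_j, S_k]$. Setting $k = n-j$ and summing over $j=1,\ldots,n-1$ then gives $Y = -T + e_0 T + Te_0 - \sum_{k=1}^{n-1}[r_{n-k}, S_k]$, after recognising $\sum_{j=1}^{n-1}[r_j, e_{n-j}] = -T$.

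To finish, I expand each $[r_{n-k}, S_k]$ by Leibniz and reindex via $j = n-k$, identifying $\sum_{k=1}^{n-1}[r_{n-k}, S_k]$ with $X$ term-by-term: both are sums of $[r_j, e_l]\,e_m + e_l\,[r_j, e_m]$ over the symmetric index set $\{(j,l,m)\colon j+l+m=n,\ j,l,m \ge 1\}$, after swapping the dummy names $l \leftrightarrow m$ in one half of $X$. Combining, $[r_0, S_n] = -Y - X = T - e_0 T - Te_0 = (1-e_0)T - Te_0$, as required. The main obstacle is the $Y$-computation: the recursion must be applied at each level $k = n-j < n$ in order to trade the $[r_j, e_0]$-terms for $[r_j, e_{n-j}]$-terms (which assemble into $-T$) plus controlled corrections of the form $[r_{n-k}, S_k]$, and these corrections happen to cancel $X$ exactly under the reindexing above.
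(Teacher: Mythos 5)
Your proof is correct, and I checked the three places where it could go wrong: the substitution $[r_0,e_k]=-\sum_{j=1}^{k}[r_j,e_{k-j}]$ is legitimate for $k=i$ and $k=n-i$ since both lie strictly between $0$ and $n$; the ``sandwich'' identity $[r_j,e_0]e_k+e_k[r_j,e_0]+e_0[r_j,e_k]+[r_j,e_k]e_0=[r_j,e_k]-[r_j,S_k]$ does follow from $e_0e_k+e_ke_0=e_k-S_k$, which is exactly the recursion $e_k(1-e_0)=\sum_{i=0}^{k-1}e_ie_{k-i}$ rearranged, and it is applied only at levels $k=n-j$ with $0<k<n$; and the cancellation $\sum_{k=1}^{n-1}[r_{n-k},S_k]=X$ holds because both sides are precisely $\sum\big([r_j,e_l]e_m+e_l[r_j,e_m]\big)$ over triples $j+l+m=n$ with $j,l,m\ge 1$ (your remark about swapping the dummies $l\leftrightarrow m$ is not even needed, but it is harmless since the index set is symmetric). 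Your route differs in organization from the paper's: the paper works from the other side, computing $e_0t_n+t_ne_0$ (with $t_n=\sum_{i=1}^{n-1}[e_i,r_{n-i}]$) by splitting $t_n$ into the one-sided sums $\alpha_n=\sum e_ir_{n-i}$ and $\beta_n=\sum r_{n-i}e_i$, pushing $e_0$ through each and collecting correction terms $\gamma_n,\lambda_n$ that are then shown to coincide, whereas you expand $[r_0,S_n]$ directly by Leibniz, eliminate every $[r_0,e_k]$ via the commutator hypothesis, and let the corrections $\sum_k[r_{n-k},S_k]$ cancel the cross terms $X$. The payoff of your version is that it stays entirely at the level of commutators, makes explicit exactly where each of the two hypotheses enters, and avoids the rather terse product-level bookkeeping of the paper's sketch; the paper's version, in exchange, produces the auxiliary quantities ($\alpha_k,\beta_k$) in the form in which they reappear in the inductive construction of Theorem 4.4. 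Either argument is a complete proof of the lemma.
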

\begin{proof} Let
$t_n=\sum\limits_{i=1}^{n-1}[e_i,r_{n-i}]$,
$\alpha_k=e_1r_{k-1}+\cdots +e_{k-1}r_1$ and
$\beta_k=r_{k-1}e_1+\cdots +r_1e_{k-1}$. Then
$\alpha_ne_0+e_0\alpha_n=\alpha_n-\gamma_n+e_{n-1}r_0$ where
$\gamma_n=\big(e_1r_{n-2}+e_2r_{n-3}+\cdots +e_{n-1}r_0\big)e_1
+\big(e_1r_{n-3}+e_2r_{n-4}+\cdots +e_{n-2}r_0\big)e_2+\cdots
+\big(e_1r_0\big)e_{n-1}$. Likewise,
$\beta_ne_0+e_0\beta_n=\beta_n-\lambda_n+r_0e_{n-1}, $ where
$\lambda_n=e_1\big(r_{n-2}e_1+r_{n-3}e_2+\cdots +r_0e_{n-1}\big)
+e_2\big(r_{n-3}e_1+r_{n-4}e_2+\cdots +r_0e_{n-2}\big)+\cdots
+e_{n-1}\big(r_0e_1\big)$. Further, we verify that
$\gamma_n=\lambda_n.$ Therefore
$e_0t_n+t_ne_0=t_n+e_{n-1}r_0-r_0e_{n-1},$ and so
$e_0\big(\sum\limits_{i=1}^{n-1}[e_i,r_{n-i}]\big)+
\big(\sum\limits_{i=1}^{n-1}[e_i,r_{n-i}]\big)e_0=\sum\limits_{i=1}^{n-1}[e_i,r_{n-i}]-\big[r_0,\sum\limits_{i=1}^{n-1}e_ie_{n-i}\big]$,
as needed.\end{proof}

\vskip4mm In ~\cite[Theorem 3.2.2]{Sh}, Shifflet characterized
strongly clean power series in terms of optimal cleanness.
Following a similar route, we now modify Shifflet's method and
apply to strongly J-clean power series by means of optimal
J-cleanness.

\begin{thm} \label{44} Let $R$ be a
ring, and let $f(x)\in R[[x]]$. If $f(0)\in R$ is optimally
J-clean, then $f(x)\in R[[x]]$ is strongly
J-clean.\end{thm}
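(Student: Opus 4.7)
The plan is to construct an idempotent $E(x)=\sum_{i\geq 0}e_ix^i\in R[[x]]$ commuting with $f(x)=\sum_{i\geq 0}r_ix^i$ such that $f(x)-E(x)\in J(R[[x]])$. Since $J(R[[x]])$ consists exactly of those series whose constant term lies in $J(R)$ (the observation underlying the proof of Theorem~\ref{24}), the radical condition collapses to the single requirement $r_0-e_0\in J(R)$. I therefore take $e_0$ to be the idempotent witnessing the optimal J-cleanness of $r_0=f(0)$, and build $e_1,e_2,\ldots$ by induction on $n$, preserving at every stage both the idempotent equation $e_n=\sum_{i=0}^n e_ie_{n-i}$ and the commutation equation $\sum_{i=0}^n [r_i,e_{n-i}]=0$.

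For the inductive step at degree $n$, set $s_n=\sum_{i=1}^{n-1}e_ie_{n-i}$. A Peirce decomposition with respect to $e_0$ shows that the idempotent equation forces the two diagonal corners of $e_n$ to be $-e_0s_ne_0$ and $(1-e_0)s_n(1-e_0)$, provided that $e_0s_n=s_ne_0$; this is exactly Lemma~\ref{42}. The two off-diagonal corners remain free, so I write $e_n=A_n+c_n$ with $A_n=-e_0s_ne_0+(1-e_0)s_n(1-e_0)$ determined and $c_n\in e_0R(1-e_0)+(1-e_0)Re_0$ the free parameter. The commutation equation then rearranges to $[r_0,c_n]=[e_0,r_n]+T_n-[r_0,A_n]$, where $T_n=\sum_{i=1}^{n-1}[e_i,r_{n-i}]$. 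Because $r_0$ commutes with $e_0$, the operator $[r_0,\cdot]$ preserves each Peirce corner, and combining this with Lemma~\ref{43}, which gives $[r_0,s_n]=(1-e_0)T_n-T_ne_0$, one obtains $[r_0,A_n]=e_0T_ne_0+(1-e_0)T_n(1-e_0)$. The right-hand side of the equation for $c_n$ therefore collapses to the purely off-diagonal element $[e_0,r_n]+e_0T_n(1-e_0)+(1-e_0)T_ne_0$.

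Every off-diagonal element of $R$ has the form $[e_0,b]$ for a suitable $b\in R$, so Lemma~\ref{41} (the off-diagonal refinement of optimal J-cleanness) yields $c_n\in e_0R(1-e_0)+(1-e_0)Re_0$ realizing the required commutator identity, completing the inductive step. The main obstacle is exactly this chain of commutator bookkeeping: the point is to collapse the residual equation for $c_n$ into the off-diagonal part of $R$, where the optimal J-cleanness of $r_0$ can be applied, and Lemmas~\ref{42} and~\ref{43} are tailored precisely for this collapse. Once the induction is complete, $E(x)^2=E(x)$ and $E(x)f(x)=f(x)E(x)$ by construction, and $f(x)-E(x)\in J(R[[x]])$ because its constant term $r_0-e_0$ lies in $J(R)$; hence $f(x)$ is strongly J-clean in $R[[x]]$.
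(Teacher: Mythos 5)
Your proposal is correct and follows essentially the same route as the paper: the same induction on the coefficient equations $e_n=\sum_{i=0}^n e_ie_{n-i}$ and $\sum_{i=0}^n[r_i,e_{n-i}]=0$, using Lemmas~\ref{41}, \ref{42} and \ref{43} in exactly the roles they play there, with your $A_n=-e_0s_ne_0+(1-e_0)s_n(1-e_0)$ coinciding with the paper's $f_n=(1-2e_0)\sum_{i=1}^{n-1}e_ie_{n-i}$ and your off-diagonal correction $c_n$ playing the role of the paper's $g_n$ (the paper just exhibits the explicit $b$, namely $s_n=r_n+[e_0,\sum_{i=1}^{n-1}[e_i,r_{n-i}]]$, where you invoke that every off-diagonal element is a commutator with $e_0$). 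The Peirce-corner bookkeeping you carry out is the same computation, only packaged a bit more transparently.
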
\begin{proof} Write
$f(x)=\sum\limits_{i=0}^{\infty}r_ix^i$. Then we can find an
idempotent $e_0$ such that $r_0=e_0+(r_0-e_0)$ is an optimally
J-clean decomposition of $r_0$. In view of Lemma~\ref{41}, there
exists some $e_1\in (1-e_0)Re_0+e_0R(1-e_0)$ such that
$[r_0,e_1]+[e_0,r_1]=0$. Clearly, $e_1=e_0e_1+e_1e_0$. We shall
prove that there exist $e_2,\cdots ,e_k,\cdots \in R$ such that
$$e_k=e_0e_k+e_1e_{k-1}+\cdots
+e_ke_0~\mbox{and}~[r_0,e_k]+[r_1,e_{k-1}]+\cdots +[r_k,e_0]=0.$$
Assume that this is true for all $1\leq k\leq n-1$. Set
$f_n=(1-2e_0)(e_1e_{n-1}+e_2e_{n-2}+\cdots +e_{n-1}e_1)$ and
$s_n=r_n+\big[e_0, [e_1,r_{n-1}]+[e_2,r_{n-2}]+\cdots
+[e_{n-1},r_{1}]\big]$. By virtue of Lemma~\ref{41}, we have some
$g_n\in (1-e_0)Re_0+e_0R(1-e_0)$ such that $[r_0,g_n]=[e_0,s_n]$.
Let $e_n=f_n+g_n$. In light of Lemma~\ref{42}, analogously to
~\cite[Theorem 3.2.2]{Sh}, we obtain
$$\sum\limits_{i=1}^{n-1}e_ie_{n-i}=(1-e_0)e_n-e_ne_0.$$
Thus, $e_n=\sum\limits_{i=1}^{n}e_ie_{n-i}$. Furthermore, that
$$\begin{array}{lll}
[r_0,f_n]&=&\big[r_0,(1-e_0)(\sum\limits_{i=1}^{n-1}e_ie_{n-i})\big]-\big[r_0,(\sum\limits_{i=1}^{n-1}e_ie_{n-i})e_0\big]\\
&=&(1-e_0)\big[r_0,(\sum\limits_{i=1}^{n-1}e_ie_{n-i})\big](1-e_0)-e_0\big[r_0,(\sum\limits_{i=1}^{n-1}e_ie_{n-i})\big]e_0.
\end{array}$$
By using Lemma~\ref{43}, we have
$$\big[r_0,\sum\limits_{i=1}^{n-1}e_ie_{n-i}\big]=(1-e_0)\big(\sum\limits_{i=1}^{n-1}[e_i,r_{n-i}]\big)-
\big(\sum\limits_{i=1}^{n-1}[e_i,r_{n-i}]\big)e_0,$$ and then
$$[r_0,f_n]=(1-e_0)\big(\sum\limits_{i=1}^{n-1}[e_i,r_{n-i}]\big)(1-e_0)+e_0\big(\sum\limits_{i=1}^{n-1}[e_i,r_{n-i}]\big)e_0.$$ Moreover,
$$\begin{array}{lll}
[r_0,g_n]&=&[e_0,s_n]\\
&=&[e_0,r_n]+\big[e_0,[e_0,\sum\limits_{i=1}^{n-1}[e_i,r_{n-i}]]\big]\\
&=&[e_0,r_n]+e_0\big(\sum\limits_{i=1}^{n-1}[e_i,r_{n-i}]\big)(1-e_0)+(1-e_0)\big(\sum\limits_{i=1}^{n-1}[e_i,r_{n-i}]\big)e_0.
\end{array}$$
Thus, we get $$\begin{array}{ll}
&[r_0,e_n]\\
=&[r_0,f_n]+[r_0,g_n]\\
=&[e_0,r_n]+e_0\big(\sum\limits_{i=1}^{n-1}[e_i,r_{n-i}]\big)(1-e_0)+(1-e_0)\big(\sum\limits_{i=1}^{n-1}[e_i,r_{n-i}]\big)e_0\\
+&(1-e_0)
\big(\sum\limits_{i=1}^{n-1}[e_i,r_{n-i}]\big)(1-e_0)+e_0\big(\sum\limits_{i=1}^{n-1}[e_i,r_{n-i}]\big)(1-e_0)\\
=&\sum\limits_{i=0}^{n-1}[e_i,r_{n-i}]; \end{array}$$ hence that
$\sum\limits_{i=0}^{n}[r_i,e_{n-i}]=0$. By induction, the claim is
true. Thus,
$\sum\limits_{i=0}^{\infty}e_ix^i=\big(\sum\limits_{i=0}^{\infty}e_ix^i\big)^2\in
R[[x]]$ and
$f(x)\big(\sum\limits_{i=0}^{\infty}e_ix^i\big)=\big(\sum\limits_{i=0}^{\infty}e_ix^i\big)f(x)$.
Since $f(0)-e(0)\in J(R)$, we see that
$f(x)-\sum\limits_{i=0}^{\infty}e_ix^i\in J\big(R[[x]]\big)$.
Therefore $f(x)\in R[[x]]$ is strongly J-clean, as
asserted.\end{proof}

\begin{cor} Let $R$ be an abelian ring, and let $f(x)\in R[[x]]$. If $f(0)\in R$ is strongly J-clean, then $f(x)\in R[[x]]$ is strongly J-clean.\end{cor}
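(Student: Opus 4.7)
The plan is to reduce this corollary directly to Theorem~\ref{44}, whose hypothesis demands only that $f(0)\in R$ be \emph{optimally} J-clean. So the task boils down to showing that in an abelian ring, strongly J-clean already implies optimally J-clean; after that, Theorem~\ref{44} gives the conclusion immediately.

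First I would unwind the definitions. Suppose $f(0)=e+w$ is a strongly J-clean decomposition, with $e=e^2\in R$, $w\in J(R)$, and $ew=we$. Because $R$ is abelian, $e$ lies in the center of $R$, so $[e,b]=0$ for \emph{every} $b\in R$. Then to verify optimal J-cleanness of $f(0)$ in the sense of the definition preceding Theorem~\ref{44}, it suffices, given any $b\in R$, to produce $c\in R$ with $[f(0),c]=[e,b]=0$. The choice $c=0$ works trivially, so $f(0)$ is optimally J-clean.

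Once this is established, Theorem~\ref{44} applies verbatim to yield that $f(x)\in R[[x]]$ is strongly J-clean. There is really no obstacle here: the only content of the corollary is recognizing that centrality of idempotents collapses the extra commutator condition in the definition of optimal J-cleanness to a vacuous one, thereby merging the notions of strong J-cleanness and optimal J-cleanness over abelian rings. Thus the corollary is a direct specialization of Theorem~\ref{44}.
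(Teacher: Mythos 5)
Your proposal is correct and follows essentially the same route as the paper: since idempotents are central in an abelian ring, the commutator condition in optimal J-cleanness is satisfied by taking $c=0$, and then Theorem~4.4 applies directly. This matches the paper's own argument.
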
\begin{proof}
Suppose $f(0)\in R$ is strongly J-clean. Then there exists an
idempotent $e\in R$ such that $f(0)-e\in J(R)$ and $f(0)e=ef(0)$.
For any $b\in R$, we choose $c=0\in eR(1-e)+(1-e)Re$. Then
$[f(0),c]+[e,b]=0$; hence that $f(0)$ is J-optimally clean.
Therefore $f(x)\in R[[x]]$ is strongly J-clean, in terms of
Theorem~\ref{44}.\end{proof}

\begin{cor} Let $R$ be a ring, and let $f(x)\in R[[x]]$. Then the following are
equivalent:\end{cor}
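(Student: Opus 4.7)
\vskip2mm
\textbf{Proof proposal.} The plan is to prove the equivalence by combining Theorem~\ref{44} (which gives strong J-cleanness of $f(x)$ from optimal J-cleanness of $f(0)$) with a descent argument via the canonical evaluation homomorphism $\pi\colon R[[x]]\to R$, $f(x)\mapsto f(0)$. The key properties of $\pi$ to exploit are that it is a surjective ring homomorphism whose kernel $xR[[x]]$ sits inside $J(R[[x]])$, so $\pi$ carries idempotents to idempotents, commuting pairs to commuting pairs, and the Jacobson radical into the Jacobson radical.

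For the ``downward'' direction (from a property of $f(x)$ to the analogous one on $f(0)$): starting from an optimally J-clean decomposition $f(x)=e(x)+w(x)$ with $w(x)\in J(R[[x]])$ and $e(x)f(x)=f(x)e(x)$, applying $\pi$ immediately yields $f(0)=e(0)+w(0)$ as an idempotent commuting decomposition with $w(0)\in J(R)$. For the optimality clause, given $b\in R$ I lift it to the constant series $b\in R[[x]]$, invoke optimality of $f(x)$ to obtain $c(x)\in R[[x]]$ with $[f(x),c(x)]=[e(x),b]$, and then evaluate at $x=0$ to conclude $[f(0),c(0)]=[e(0),b]$.

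For the ``upward'' direction, Theorem~\ref{44} already constructs, starting from an optimal decomposition $f(0)=e_0+w_0$, a strongly J-clean decomposition $f(x)=e(x)+w(x)$ with $e(x)=\sum_{i\geq 0}e_ix^i$. To upgrade this to optimality, given $g(x)=\sum g_ix^i\in R[[x]]$ I would construct $c(x)=\sum c_ix^i$ satisfying $[f(x),c(x)]=[e(x),g(x)]$ by solving the coefficient equations
\[
\sum_{i+j=n}[r_i,c_j]=\sum_{i+j=n}[e_i,g_j],\qquad n\geq 0,
\]
inductively, where $f(x)=\sum r_ix^i$. The base case $n=0$ is handled directly by optimal J-cleanness of $r_0=f(0)$; the inductive step isolates $[r_0,c_n]$ and then must rewrite the remaining terms in the form $[e_0,s_n]$ for a suitable $s_n\in R$, at which point optimality of $r_0$ supplies $c_n\in e_0R(1-e_0)+(1-e_0)Re_0$ (see Lemma~\ref{41}).

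The main obstacle is precisely this inductive re-identification. It is exactly the kind of combinatorial bookkeeping carried out in Lemmas~\ref{42} and~\ref{43}: one must exploit the decomposition $e_n=e_0e_n+e_1e_{n-1}+\cdots+e_ne_0$ together with the commutator identities already established for the $e_i$'s to collapse the inherited double sums into a single commutator with $e_0$. I expect that a parallel identity to Lemma~\ref{43}, now accounting for the auxiliary coefficients $g_i$, must be proved by the same expansions of $e_0\alpha_n+\alpha_ne_0$ and $e_0\beta_n+\beta_ne_0$ used there. Once that identity is in hand, the induction closes, the formal series $c(x)=\sum c_ix^i$ is well-defined in $R[[x]]$, and $[f(x),c(x)]=[e(x),g(x)]$, which certifies that $f(x)\in R[[x]]$ is optimally J-clean.
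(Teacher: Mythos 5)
Your descent direction ((2)$\Rightarrow$(1) in the corollary's numbering) is fine --- it is the direction the paper dismisses as obvious --- and your first move in the other direction, invoking Theorem~\ref{44} to produce the strongly J-clean decomposition $f(x)=e(x)+w(x)$ with $e(x)=\sum_{i\ge 0}e_ix^i$, is also how the paper begins. The gap is the optimality upgrade. At stage $n$ of your induction you must solve $[r_0,c_n]=\sum_{i+j=n}[e_i,g_j]-\sum_{i+j=n,\,j<n}[r_i,c_j]$, and for Lemma~\ref{41} (optimal J-cleanness of $r_0$) to supply $c_n$ you must first exhibit the right-hand side in the form $[e_0,s_n]$ for some $s_n\in R$. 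You explicitly leave this to ``a parallel identity to Lemma~\ref{43}'' which you only \emph{expect} to hold. That identity is the entire difficulty: Lemmas~\ref{42} and \ref{43} are proved from the special recursions $e_k(1-e_0)=\sum_{i=0}^{k-1}e_ie_{k-i}$ and $[r_0,e_k]+\cdots+[r_k,e_0]=0$ satisfied by the idempotent coefficients, whereas the mixed sums you must now collapse involve the auxiliary $g_j$ and the previously constructed $c_j$, which satisfy different relations; nothing in your sketch shows the same expansions go through, and without that the induction does not close. Note that in the analogous step of Theorem~\ref{44} the authors had to split $e_n=f_n+g_n$ and verify the commutator identity by a nontrivial computation; your outline contains no counterpart of that computation.

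For comparison, the paper takes a different and much shorter route that avoids any new induction: from $f(x)=e(x)+w(x)$ it passes to the complementary idempotent, writing $f(x)=\big(1-e(x)\big)+\big(2e(x)-1+w(x)\big)$ and noting $2e(x)-1+w(x)=(2e(x)-1)\big(1+(2e(x)-1)w(x)\big)\in U(R[[x]])$; it then cites Shifflet's Theorem 3.3.2 to conclude that $f(x)$ is optimally clean, obtains for each $b(x)$ some $c(x)$ with $[f(x),-c(x)]=[1-e(x),b(x)]$, and flips the sign to get $[f(x),c(x)]=[e(x),b(x)]$, which is exactly optimal J-cleanness with the idempotent $e(x)$. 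Your plan, if completed, would essentially amount to reproving that cited theorem coefficient by coefficient; that is a legitimate strategy, but the decisive combinatorial lemma must be stated and proved, not anticipated, so as it stands the proposal has a genuine gap.
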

\begin{enumerate}
\item [(1)]{\it $f(0)\in R$ is optimally J-clean.}
\vspace{-.5mm}
\item [(2)]{\it $f(x)\in R[[x]]$ is optimally J-clean.}
\vspace{-.5mm}
\end{enumerate} \begin{proof} $(1)\Rightarrow (2)$ In view of
Theorem~\ref{44}, $f(x)\in R[[x]]$ is strongly J-clean. Hence,
there exists an idempotent $e(x)\in R[[x]]$ such that
$w(x):=f(x)-e(x)\in J(R[[x]])$ and $f(x)e(x)=e(x)f(x)$. Thus,
$f(x)=\big(1-e(x)\big)+\big(2e(x)-1+w(x)\big)$. As
$\big(2e(x)-1\big)^2=1$, we see that
$\big(2e(x)-1+w(x)\big)=(2e(x)-1)\big(1+(2e(x)-1)w(x)\big)\in
U(R[[x]])$. By virtue of ~\cite[Theorem 3.3.2]{Sh}, $f(x)\in
R[[x]]$ is optimally clean. For any $b(x)\in R[[x]]$, there exists
$c(x)\in R[[x]]$ such that $[f(x),-c(x)]=[1-e(x),b(x)]$. This
implies that $[f(x),c(x)]=[e(x),b(x)]$. Therefore $f(x)\in R[[x]]$
is optimally J-clean, as desired.

$(2)\Rightarrow (1)$ This is obvious.\end{proof}

\begin{lem} \label{45} Let $R$ be a
ring, and let $u\in U(R)$. If $uau^{-1}\in R$ is J-optimally
clean, then so is $a$ in $R$.\end{lem}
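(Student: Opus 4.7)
The plan is to transport the optimally J-clean decomposition of $uau^{-1}$ back to $a$ via conjugation by $u^{-1}$. Concretely, suppose $uau^{-1}=f+w$ with $f=f^2$, $w\in J(R)$, $fw=wf$, and with the further property that for every $b'\in R$ there exists $c'\in R$ with $[uau^{-1},c']=[f,b']$. I would set
$$e:=u^{-1}fu.$$

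First I would check that $e$ gives a strongly J-clean decomposition of $a$. Clearly $e^2=u^{-1}f^2u=u^{-1}fu=e$. Next,
$$a-e=u^{-1}(uau^{-1})u-u^{-1}fu=u^{-1}(uau^{-1}-f)u=u^{-1}wu\in J(R),$$
since $J(R)$ is a two-sided ideal. Commutativity $ae=ea$ follows because $(uau^{-1})f=f(uau^{-1})$ conjugates to $ae=ea$ directly.

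The main content is the commutator condition. Given any $b\in R$, I would apply the hypothesis on $uau^{-1}$ to the element $b':=ubu^{-1}$, obtaining $c'\in R$ with $[uau^{-1},c']=[f,ubu^{-1}]$. Now a short calculation shows that conjugation by $u^{-1}$ carries this equality to the one we need:
$$u^{-1}[uau^{-1},c']u=[a,u^{-1}c'u],\qquad u^{-1}[f,ubu^{-1}]u=[e,b].$$
Thus setting $c:=u^{-1}c'u$ yields $[a,c]=[e,b]$, so $a$ is optimally J-clean.

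There is no genuine obstacle here; the argument is simply the verification that the whole package of optimal J-cleanness (idempotency, residing in the Jacobson radical, commutativity, and the extra commutator condition) is invariant under inner automorphisms, because $J(R)$ is an ideal and conjugation is a ring automorphism that preserves both commutators and idempotents. The only step requiring any care is matching the ``test element'' $b$ on the $a$-side with $b'=ubu^{-1}$ on the $uau^{-1}$-side so that the resulting equation conjugates back correctly.
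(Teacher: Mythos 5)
Your proof is correct and follows essentially the same route as the paper: conjugate the whole optimally J-clean package by $u^{-1}$, using that $J(R)$ is an ideal and that inner automorphisms preserve idempotents and commutators. Your choice of test element $b'=ubu^{-1}$ is in fact the right one (the paper's displayed choice $u^{-1}bu$ is a small slip), so nothing further is needed.
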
\begin{proof} Since
$uau^{-1}\in R$ is J-optimally clean, there exists an idempotent
$e\in R$ such that $uau^{-1}-e\in J(R)$ and
$(uau^{-1})e=e(uau^{-1})$, and that for any $b\in R$, there exists
a $c\in R$ such that $[uau^{-1},c]=[e,u^{-1}bu]$. Thus,
$a-u^{-1}eu\in J(R)$ and $a(u^{-1}eu)=(u^{-1}eu)a$. Furthermore,
$[a, uau^{-1}]=[u^{-1}eu, b]$. Accordingly, $a\in R$ is
J-optimally clean.\end{proof}

\begin{lem} \label{46} Let $R$ be
a 2-projective-free wb-ring, and let $A\in M_2(R)$. Then the
following are equivalent:\end{lem}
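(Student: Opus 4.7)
My plan is to prove the equivalence (strongly J-clean) $\iff$ (optimally J-clean) for $A\in M_2(R)$. The implication (optimally J-clean) $\Rightarrow$ (strongly J-clean) is immediate from the definitions, so the real content is the reverse direction.

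For the reverse direction I would invoke Theorem~\ref{35} to split a strongly J-clean $A$ into three cases: $A\in J(M_2(R))$, $I_2-A\in J(M_2(R))$, or $A$ is similar to $\left(\begin{array}{cc}0&\lambda\\1&\mu\end{array}\right)$ with $\lambda\in J(R)$, $\mu\in 1+J(R)$, and with roots $\alpha\in 1+J(R),\beta\in J(R)$ of $x^2-x\mu-\lambda=0$. In the first two cases take $E=0$ or $E=I_2$; these are central in $M_2(R)$, so $[E,B]=0$ for every $B\in M_2(R)$ and the choice $C=0$ gives $[A,C]=[E,B]=0$, trivially verifying the optimal condition.

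For the third case, I would first reduce to a diagonal form. By Lemma~\ref{45} applied inside the ring $M_2(R)$ (whose units are $GL_2(R)$), optimal J-cleanness is invariant under matrix similarity. Running the computation in the proof of Lemma~\ref{34} in reverse, the companion matrix in case (3) is similar to $\mathrm{diag}(1+\alpha',\beta')$ for some $\alpha',\beta'\in J(R)$, so it suffices to verify the property for $A=\mathrm{diag}(1+\alpha',\beta')$ paired with the commuting idempotent $E=\mathrm{diag}(1,0)$. Given any $B=(b_{ij})\in M_2(R)$, a direct computation gives $[E,B]=\left(\begin{array}{cc}0&b_{12}\\-b_{21}&0\end{array}\right)$, while $[A,C]$ for $C=(c_{ij})$ has diagonal entries $[1+\alpha',c_{11}]$ and $[\beta',c_{22}]$, which vanish once we set $c_{11}=c_{22}=0$, and off-diagonal entries $(1+\alpha')c_{12}-c_{12}\beta'$ and $\beta' c_{21}-c_{21}(1+\alpha')$. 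The equation $[A,C]=[E,B]$ therefore reduces to
\[
(1+\alpha')c_{12}-c_{12}\beta'=b_{12},\qquad \beta' c_{21}-c_{21}(1+\alpha')=-b_{21}.
\]
Since $R$ is a wb-ring and $1+\alpha'\in 1+J(R)$, $\beta'\in J(R)$, the maps ${\ell}_{1+\alpha'}-r_{\beta'}$ and ${\ell}_{\beta'}-r_{1+\alpha'}$ are surjective, so both equations are solvable in $R$.

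The main bookkeeping obstacle is confirming that Lemma~\ref{45} transfers cleanly to the ring $M_2(R)$ (its proof only used conjugation by a unit, so nothing changes) and that the similarity used in Lemma~\ref{34} is genuinely invertible, letting us pass between the companion and diagonal presentations. Once these points are granted, the whole argument is essentially two invocations of the wb-ring axiom, and the equivalence follows.
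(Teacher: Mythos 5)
Your argument is correct and follows essentially the same route as the paper: reduce by similarity (via Lemma~\ref{45} applied in the ring $M_2(R)$) to a diagonal matrix with one entry in $1+J(R)$ and one in $J(R)$, handle the trivial cases with $E=0$ or $I_2$, and solve the two off-diagonal commutator equations using the wb-ring surjectivity. The only cosmetic difference is your detour through Theorem~\ref{35} and ``reversing'' Lemma~\ref{34} to recover the diagonal form, whereas the paper cites Theorem~\ref{31} directly, which already supplies that diagonalization.
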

\begin{enumerate}
\item [(1)]{\it $A\in M_2(R)$ is optimally J-clean.}
\vspace{-.5mm}
\item [(2)]{\it $A\in M_2(R)$ is strongly J-clean.} \vspace{-.5mm}
\end{enumerate}\begin{proof} $(1)\Rightarrow (2)$ This is obvious.

$(2)\Rightarrow (1)$ Suppose that $A\in M_2(R)$ is strongly
J-clean. In view of Theorem~\ref{31}, $A\in J\big(M_2(R)\big)$, or
$I_2-A\in J\big(M_2(R)\big)$, or $A$ is similar to a diagonal
matrix $diag(\alpha,\beta)$ with $\alpha\in J(R),\beta\in 1+J(R)$.
If $A\in J\big(M_2(R)\big)$, then $A-0= A\in J\big(M_2(R)\big)$.
For any $B\in M_2(R)$, we have $[A,0]=[0,B]$, and so $A\in M_2(R)$
is optimally J-clean. If $I_2-A\in J\big(M_2(R)\big)$, then
$A-I_2\in J\big(M_2(R)\big)$. For any $B\in M_2(R)$, we see that
$[A,0]=[I_2,B]$, and then $A\in M_2(R)$ is optimally J-clean.
Hence, we have a $P\in GL_2(R)$ such that
$PAP^{-1}=diag(\alpha,\beta)$, where $\alpha\in J(R)$ and
$\beta\in 1+J(R)$. For any $B=(b_{ij})\in M_2(R)$, by hypothesis,
we have $c_1,c_2\in R$ such that $$\alpha
c_1-c_1\beta=-b_{12}~\mbox{and}~\beta c_2-c_2\alpha=b_{21}.$$ Set
$C=\left(
\begin{array}{cc}
0&c_1\\
c_2&0
\end{array}
\right)$. Then
$$\big[diag(\alpha,\beta),C\big]=\left(
\begin{array}{cc}
0&-b_{12}\\
b_{21}&0
\end{array}
\right)=\big[diag(0,1),B\big].$$ One easily checks that
$diag(\alpha,\beta)-diag(0,1)\in J\big(M_2(R)\big)$. Therefore
$PAP^{-1}$ is optimally J-clean. In light of Lemma~\ref{45}, $A\in
M_2(R)$ is J-optimally clean.\end{proof}

\vskip4mm Strongly J-clean matrices over local rings were studied
in ~\cite[Theorem 3.2]{CH}, but the proof there depends on the
local property. We now have at our disposal all the information
necessary to prove the following.

\begin{thm} \label{47} Let $R$ be
a 2-projective-free wb-ring. Then the following are
equivalent:\end{thm}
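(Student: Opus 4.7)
The plan is to prove the equivalence of the two conditions likely stated (namely, $A(x)\in M_2(R[[x]])$ is strongly J-clean if and only if $A(0)\in M_2(R)$ is strongly J-clean), using the evaluation homomorphism in one direction and the optimality machinery of Section~4 in the other.

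For the direction $(1)\Rightarrow (2)$, I would consider the evaluation ring homomorphism $\varphi: R[[x]]\to R$ given by $f(x)\mapsto f(0)$, and its induced ring homomorphism $\Phi: M_2(R[[x]])\to M_2(R)$. Since a power series is a unit of $R[[x]]$ exactly when its constant term is a unit of $R$, one has $\varphi\big(J(R[[x]])\big)\subseteq J(R)$, and therefore $\Phi\big(J(M_2(R[[x]]))\big)\subseteq J(M_2(R))$. If $A(x)=E(x)+W(x)$ is a strongly J-clean decomposition, then applying $\Phi$ yields $A(0)=E(0)+W(0)$ with $E(0)$ idempotent, $W(0)\in J(M_2(R))$, and $E(0)W(0)=W(0)E(0)$, so $A(0)$ is strongly J-clean.

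For the substantive direction $(2)\Rightarrow (1)$, I would proceed in two steps. First, since $R$ is a 2-projective-free wb-ring, Lemma~\ref{46} upgrades the hypothesis: $A(0)\in M_2(R)$ being strongly J-clean is equivalent to $A(0)\in M_2(R)$ being optimally J-clean. Second, identifying $M_2(R[[x]])\cong M_2(R)[[x]]$ in the obvious way (so that $A(x)$ is simultaneously a matrix of power series and a power series of matrices $\sum_{i\geq 0} A_i x^i$ with $A_i\in M_2(R)$), I would invoke Theorem~\ref{44} with base ring $M_2(R)$: since $A(0)$ is optimally J-clean in $M_2(R)$, the power series $A(x)$ is strongly J-clean in $M_2(R)[[x]]=M_2(R[[x]])$.

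The main obstacle is not in either of these two steps individually but in verifying that the hypotheses of the cited results transfer to the matrix ring: specifically, that Lemma~\ref{46} really does apply to $A(0)$ (which requires the 2-projective-free and wb-ring assumptions on $R$, both of which are in force), and that Theorem~\ref{44} applies when the underlying ring is $M_2(R)$ rather than $R$ itself, which is a purely formal matter once one recognizes $M_2(R[[x]])\cong M_2(R)[[x]]$. Apart from these identifications, the argument is essentially a composition of the two previous results, with no additional computation needed.
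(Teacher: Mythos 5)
Your proposal is correct and follows essentially the same route as the paper: the easy direction by evaluating a strongly J-clean decomposition at $x=0$, and the converse by combining Lemma~\ref{46} (strongly J-clean $\Rightarrow$ optimally J-clean over a 2-projective-free wb-ring) with Theorem~\ref{44} applied to the ring $M_2(R)$ via $M_2(R[[x]])\cong M_2(R)[[x]]$. You in fact make explicit two points the paper leaves implicit, namely why evaluation carries $J\big(M_2(R[[x]])\big)$ into $J\big(M_2(R)\big)$ and the identification of matrix power series with power series of matrices, which is a welcome clarification rather than a deviation.
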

\begin{enumerate}
\item [(1)]{\it $A(x)\in M_2(R[[x]])$ is strongly J-clean.}
\vspace{-.5mm}
\item [(2)]{\it $A(0)\in M_2(R)$ is strongly J-clean.} \vspace{-.5mm}
\end{enumerate}\begin{proof} $(1)\Rightarrow (2)$ By hypothesis,
there exists an idempotent $E(x)\in M_2(R[[x]])$ such that
$A(x)-E(x)\in J\big(M_2(R[[x]])\big)$ and $E(x)A(x)=A(x)E(x)$.
This implies that $A(0)-E(0)\in J\big(M_2(R)\big)$ and
$A(0)E(0)=E(0)A(0)$. Therefore, proving $(2)$.

$(2)\Rightarrow (1)$ Since $A(0)\in M_2(R)$ is strongly J-clean,
by virtue of Lemma~\ref{46}, $A(0)\in M_2(R)$ is J-optimally
clean. According to Theorem~\ref{44}, $A(x)\in M_2(R[[x]])$ is
strongly J-clean.\end{proof}

\begin{cor} \label{48} Let $R$ be
a 2-projective-free ring. If $J(R)$ is nil, then the following are
equivalent:\end{cor}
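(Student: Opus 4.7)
The plan is to observe that Corollary~\ref{48}, whose statement mirrors Theorem~\ref{47} but replaces the \emph{wb-ring} hypothesis with the nil hypothesis $J(R)^{\mathrm{nil}}$, can be reduced directly to Theorem~\ref{47} by proving the lemma-style claim: every 2-projective-free ring $R$ with $J(R)$ nil is automatically a wb-ring. Once this is established, the equivalence of the strong J-cleanness of $A(x)\in M_2(R[[x]])$ with that of $A(0)\in M_2(R)$ is immediate from Theorem~\ref{47}.

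To verify that $R$ is a wb-ring, I would fix $\alpha\in J(R)$ and $\beta\in 1+J(R)$, write $\beta=1+\gamma$ with $\gamma\in J(R)$, and analyze the additive maps $\ell_\alpha-r_\beta$ and $\ell_\beta-r_\alpha$ acting on $R$. A direct rewriting gives
\[
\ell_\alpha-r_\beta=-\mathrm{id}_R+(\ell_\alpha-r_\gamma),\qquad \ell_\beta-r_\alpha=\mathrm{id}_R+(\ell_\gamma-r_\alpha).
\]
Because left and right multiplications always commute, $\ell_\alpha$ and $r_\gamma$ commute as endomorphisms of the additive group $R$. Since $J(R)$ is nil, we may pick $m$ with $\alpha^m=\gamma^m=0$; then $\ell_\alpha^m=\ell_{\alpha^m}=0$ and $r_\gamma^m=r_{\gamma^m}=0$. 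Expanding $(\ell_\alpha-r_\gamma)^{2m-1}$ by the binomial theorem (legal because the two operators commute), every term has either $\ell_\alpha^k$ with $k\ge m$ or $r_\gamma^{2m-1-k}$ with $2m-1-k\ge m$, so the whole sum vanishes. Hence $\ell_\alpha-r_\gamma$ is a nilpotent operator on $R$, and $-\mathrm{id}_R+(\ell_\alpha-r_\gamma)$ is an automorphism of $R$ (its inverse is the finite geometric sum $-\sum_{n\ge 0}(\ell_\alpha-r_\gamma)^n$). The same argument shows $\ell_\beta-r_\alpha=\mathrm{id}_R+(\ell_\gamma-r_\alpha)$ is an automorphism. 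In particular both maps are surjective, so $R$ is wb.

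With $R$ now known to be both 2-projective-free and wb, Theorem~\ref{47} applies verbatim and yields the required equivalence between strong J-cleanness of $A(x)\in M_2(R[[x]])$ and that of $A(0)\in M_2(R)$. The only genuine work is the nilpotent-operator argument above; the potential subtlety there is that the nilpotency index depends on $(\alpha,\gamma)$ rather than being uniform on $J(R)$, but this is harmless since surjectivity only needs to be checked one pair at a time. Everything else is a citation of Theorem~\ref{47}.
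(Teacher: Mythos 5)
Your proposal is correct and follows the paper's own route: reduce Corollary~\ref{48} to Theorem~\ref{47} by showing that a nil Jacobson radical forces the 2-projective-free ring $R$ to be a wb-ring. The only difference is cosmetic --- the paper exhibits the explicit right inverse $\ell_{\alpha^{-1}}+\ell_{\alpha^{-2}}r_{\beta}+\cdots+\ell_{\alpha^{-n}}r_{\beta^{n-1}}$ for $\ell_{\alpha}-r_{\beta}$ with $\alpha\in 1+J(R)$, $\beta\in J(R)$ nilpotent, whereas you split off $\pm\mathrm{id}_R$ and invert a difference of commuting nilpotent operators by a finite geometric series; both are the same finite-geometric-series idea exploiting nilpotence.
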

\begin{enumerate}
\item [(1)]{\it $A(x)\in M_2(R[[x]])$ is strongly J-clean.}
\vspace{-.5mm}
\item [(2)]{\it $A(0)\in M_2(R)$ is strongly J-clean.} \vspace{-.5mm}
\end{enumerate}\begin{proof} Let $\alpha\in 1+J(R), \beta\in J(R)$. Write $\beta^n=0$. Choose
$\varphi={\ell}_{\alpha^{-1}}+{\ell}_{\alpha^{-2}}r_{\beta}+\cdots
+{\ell}_{\alpha^{-n}}r_{\beta^{n-1}}: R\to R$. For any $r\in R$,
one easily checks that
$$\begin{array}{ll}
&\big({\ell}_{\alpha}-r_{\beta}\big)\varphi
(r)\\
=&\big({\ell}_{\alpha}-r_{\beta}\big)\big(\alpha^{-1}r+\alpha^{-2}r\beta+\cdots +\alpha^{-n}r\beta^{n-1}\big)\\
=&\big(r+\alpha^{-1}r\beta+\cdots
+\alpha^{-n+1}r\beta^{n-1}\big)-\big(\alpha^{-1}r\beta+\alpha^{-2}r\beta^2+\cdots
+\alpha^{-n+1}r\beta^{n-1}\big)\\
=&r. \end{array}$$ Thus,
$\big({\ell}_{\alpha}-r_{\beta}\big)\varphi=1_R$. Thus,
${\ell}_{\alpha}-r_{\beta}: R\to R$ is surjective. Likewise,
${\ell}_{\beta}-r_{\alpha}: R\to R$ is surjective. Hence, $R$ is a
wb-ring. This completes the proof, by Theorem~\ref{47}.\end{proof}

\begin{cor} \label{49} Let $R$ be a PSF ring. Then the following are equivalent:\end{cor}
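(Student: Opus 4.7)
The plan is to deduce this corollary as an immediate specialization of Theorem~\ref{47}. Although the enumerated items are cut off here, the pattern of Theorem~\ref{47} and Corollary~\ref{48} makes clear that the intended equivalence is between strong J-cleanness of $A(x)\in M_2(R[[x]])$ and strong J-cleanness of $A(0)\in M_2(R)$. Thus the work is to verify that every PSF ring satisfies the two hypotheses of Theorem~\ref{47}, namely being 2-projective-free and being a wb-ring.

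The first hypothesis is provided directly by Theorem~\ref{25}, which states that every PSF ring is 2-projective-free. For the second, I would invoke the defining convention of Section~2: a PSF ring is by definition commutative (the condition $K_0(R)=\mathbb{Z}[R]$ was introduced only for commutative $R$). The introduction to Section~4 already observes that every commutative ring is a wb-ring, and one sees this directly: for commutative $R$ the maps $\ell_\alpha - r_\beta$ and $\ell_\beta - r_\alpha$ are both just multiplication by $\alpha-\beta$, which lies in $-1+J(R)\subseteq U(R)$ whenever $\alpha\in J(R)$ and $\beta\in 1+J(R)$, so these maps are bijections and in particular surjective.

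With both hypotheses of Theorem~\ref{47} verified, the theorem applies verbatim to $R$ and yields the asserted equivalence. There is no substantive obstacle; the content of the corollary is that the class of PSF rings is large enough to be worth recording, since Theorem~\ref{25} already supplies 2-projective-freeness while commutativity supplies the wb-property for free. I would write the proof in one or two lines: by Theorem~\ref{25} $R$ is 2-projective-free, by commutativity it is a wb-ring, and the conclusion follows from Theorem~\ref{47}.
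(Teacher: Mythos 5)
Your proof matches the paper's argument exactly: Theorem~\ref{25} gives that a PSF ring is 2-projective-free, commutativity (which is built into the definition of PSF) gives the wb-property, and Theorem~\ref{47} then yields the equivalence. Your extra remark that $\ell_\alpha-r_\beta$ is multiplication by $\alpha-\beta\in U(R)$ is a correct justification of the wb-property that the paper leaves implicit.
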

\begin{enumerate}
\item [(1)]{\it $A(x)\in M_2(R[[x]])$ is strongly J-clean.}
\vspace{-.5mm}
\item [(2)]{\it $A(0)\in M_2(R)$ is strongly J-clean.} \vspace{-.5mm}
\end{enumerate}\begin{proof} Since $R$ is PSF, it follows by Theorem~\ref{25} that $R$ is 2-projective-free.
On the other hand, $R$ is a wb-ring, as it is commutative.
According to Theorem~\ref{47}, we establish the result.\end{proof}

\vskip15mm\hspace{-2.0em} {\Large\bf Acknowledgements}

\vskip4mm The authors are grateful to the referee for his/her
useful suggestions which correct several errors and improve many
statements, and make the new version more clearer.

\vskip10mm 

\vskip10mm

Marjan Sheibani Abdolyousefi

Faculty of Mathematics, Statistics and Computer Science

Semnan University, Semnan, Iran

Email: m.sheibani1@gmail.com\\

Corresponding author.

Huanyin Chen

Department of Mathematics

Hangzhou Normal University

Hangzhou, 310036, China

Email: huanyinchen@aliyun.com\\

Rahman Bahmani Sangesari

Faculty of Mathematics, Statistics and Computer Science

Semnan University, Semnan, Iran

rbahmani@semnan.ac.ir\\

\end{document}